\newtheorem{theorem}{Theorem}
\newtheorem{lemma}{Lemma}
\newtheorem{proposition}{Proposition}
\newcommand{\bea}{\begin{eqnarray*}}
\newcommand{\eea}{\end{eqnarray*}}
\newcommand{\ben}{\begin{eqnarray}}
\newcommand{\een}{\end{eqnarray}}
\newcommand{\beq}{\begin{equation}}
\newcommand{\eeq}{\end{equation}}
\newcommand{\supp}{\operatorname{supp}}
\begin{document}

\title{Analysis of a singular Boussinesq Model}
\author{Alexander Kiselev}
\email{kiselev@math.duke.edu}
\address{Duke University
Department of Mathematics
Durham, NC 27705}
\author{Hang Yang}
\email{hang.yang@rice.edu}
\address{Rice University
Department of Mathematics -- MS 136
P.O. Box 1892
Houston, TX 77005-1892}

\begin{abstract}
Recently, a new singularity formation scenario for the 3D axi-symmetric Euler equation and the 2D inviscid Boussinesq system has been proposed by Hou and Luo based on extensive numerical simulations \cite{HouLuo1,HouLuo2}.
As the first step to understand the scenario, models with simplified sign-definite Biot-Savart law and forcing
have recently been studied in \cite{CHKLSY,CKY,DKX,hyperbolicBoussinesq,HouLiu,KiselevTan}. In this paper,
we aim to bring back one of the complications encountered in the original equation - the sign changing kernel in the Biot-Savart law.
This makes analysis harder, as there are two competing terms in the fluid velocity integral whose balance determines the regularity properties of the solution.
The equation we study here is based on the CKY model introduced in \cite{CKY}. We prove that finite time blow up persists in a certain range of parameters.
\end{abstract}
\maketitle
\section{introduction}
The 2D inviscid Boussinesq system in vorticity form is given by
\begin{align}
\partial_t &\omega +u\cdot \partial_x \omega =\rho_{x_1} \label{Bouss1}\\
\partial_t &\rho +u\cdot \partial_x \rho =0 \label{Bouss2}\\
& u=\nabla^{\perp}(-\Delta)^{-1}\omega. \label{Bouss3}
\end{align}
The system models ideal fluid driven by buoyancy force \cite{Gill,Pedlosky}.
Solutions to the 2D Boussinesq system are globally regular if the dissipative terms $\Delta \omega,$ $\Delta \rho$ are present in at least one of the equations \eqref{Bouss1}, \eqref{Bouss2}
respectively \cite{Chae,HouLi}.  Models with fractional and/or partial diffusion have also been considered in \cite{AdCauWu1,AdCaoWu2,Wu2,Wu3,Wu4,Wu5}, where the authors show global regularity under various conditions and constraints. In the inviscid case, the finite time blow up vs global regularity question is open; in particular, it appears on the
"Eleven Great Problems of Mathematical Hydrodynamics" list by Yudovich \cite{Yudovich}. Also, the 2D inviscid Boussinesq system is very similar to the 3D axi-symmetric Euler equation
away from the symmetry axis \cite{MajdaBertozzi}. In particular, the presence of $\rho_{x_1}$ on the right hand side of (\ref{Bouss1}) enacts vortex stretching which is a common trait among the hardest problems of mathematical fluid mechanics, e.g. 3D Euler equations and 3D Navier-Stokes equations.


A few years ago, Hou and Luo \cite{HouLuo1} investigated numerically a new possible blow up scenario for the 3D axi-symmetric Euler equation. Their set up involves an infinite height cylinder
with no penetration boundary conditions on the cylinder boundary and periodic boundary conditions in the vertical direction. The initial data $\omega^\theta$ is zero and $u^\theta$ is odd with respect to
the $z$ variable. Rapid growth of vorticity $\omega^\theta$ is observed at a ring of hyperbolic points of the flow along the boundary in the $z=0$ plane \cite{HouLuo1,HouLuo2}.
For the 2D Boussinesq system, the scenario involves (after $\pi/2$ rotation) an infinite horizontal strip with solutions periodic in $x_1$ and satisfying no penetration condition on the
strip boundary. In the scenario, $\omega$ is odd and $\rho$ is even with respect to $x_1.$ Very fast growth of $\omega$ is observed at a hyperbolic point of the flow located at $x_1=0$ on the strip boundary. It should be noted that there is evidence that hyperbolic points of the flow play an important role in a number of important fluid mechanics phenomena. In particular, a recent experimental paper \cite{expturb} shows that most instances of extreme dissipation in a turbulent flow happen in regions featuring hyperbolic point/front type local geometry of the flow.

Motivated by the Hou-Luo scenario, Kiselev and Sverak \cite{KiselevSverak} considered 2D Euler equation - obtained by setting $\rho=0$ in \eqref{Bouss2} - in a similar geometry.
They constructed an example of a smooth solution with double exponential in time growth of the gradient of vorticity, showing that the upper bounds on growth of the derivatives of $\omega$
available since 1930s are qualitatively sharp.

A 1D model of the Hou-Luo scenario has been proposed already in \cite{HouLuo1}. Several works have analyzed this and a few other related models, in all cases proving finite time singularity formation
\cite{CHKLSY,CKY,DKX,HouLiu}. All these models feature Biot-Savart laws $u(x,t) = -\int_0^\infty K(x,y) \omega(y,t)\,dy$ with non-negative kernels $K.$
This helps prove transport of vorticity and density towards the origin, accompanied by growth in $\rho_{x_1}$ leading to growth of vorticity and thus to nonlinear feedback
feedback loop driving blow up.

The first two-dimensional models of the Hou-Luo scenario have been considered in \cite{hyperbolicBoussinesq,KiselevTan}. Both models are set in the first quadrant of the plane (implicitly
assuming odd symmetry of the solution) and are given by
\begin{align}
  \partial_t \omega+u\cdot \nabla \omega=\frac{\rho}{x_1} \label{modifiedBouss1}\\
\partial_t \rho+u\cdot \nabla \rho =0 \label{modifiedBouss2}\\
u(x,t)=\left(-x_1 \Omega(x,t), \,\,\, x_2 \Omega(x,t)\right). \label{hyperbolicBoussBS}
\end{align}
The models differ in the choice of $\Omega:$ in \cite{hyperbolicBoussinesq}
\[ \Omega(x,t)= \int_{S_{\alpha}}\frac{\omega(y,t)}{|y|^2}dy, \]
where $S_{\alpha}=\{(x_1,x_2):0<x_1,0<x_2<\alpha x_1\}$ is a sector in the first quadrant with arbitrary large $\alpha$ as a parameter. In \cite{KiselevTan} a slightly different integration domain $D=\{(y_1,y_2):y_1y_2\geq x_1x_2\}$ is chosen in the definition of $\Omega$. The choice of the $\Omega$ in \cite{KiselevTan} leads to incompressible fluid velocity, while the velocity
in \cite{hyperbolicBoussinesq} is not incompressible but is closer in form to the velocity representation for the 2D Euler solutions established in \cite{KiselevSverak}.
Also, both models use simplified mean field forcing term $\rho/x_1,$ which ensures that vorticity has fixed sign. The initial data is taken smooth, and supported away from $x_1$ axis.
In both works, finite time blow up is established for a fairly broad class of initial data.

Both of the above mentioned modifications as well as all 1D models considered so far share the same feature that particle trajectories for positive vorticity solutions always point to one direction: towards the $x_1=0$ axis. However, in the true 2D Boussinesq system, the kernel in the Biot-Savart law is not sign definite.
The fluid velocity is given, in a half plane $x_2 \geq 0$ and under the odd in $x_1$ symmetry assumption on $\omega,$ by
\begin{eqnarray}\nonumber
u_1(x,t) = \frac{1}{2\pi} \int_0^\infty \int_0^\infty \left(\frac{x_2-y_2}{(x_1-y_1)^2 +(x_2-y_2)^2}-\frac{x_2-y_2}{(x_1+y_1)^2 +(x_2-y_2)^2}- \right. \\
\left. \frac{x_2+y_2}{(x_1-y_1)^2 +(x_2+y_2)^2}+\frac{x_2+y_2}{(x_1+y_1)^2 +(x_2+y_2)^2}    \right) \omega(y,t)dy_1dy_2.\label{bsbous}
\end{eqnarray}
The second component $u_2$ is given by a similar formula. It is not hard to see that in \eqref{bsbous} the kernel is positive on the part of integration region, and positive vorticity
in these regions works against blow up.

In this paper, we propose a 1D model set on $\mathbb{R}$ given  by
\begin{align}
\partial_t \omega &+u\cdot \partial_x \omega =\frac{\rho}{x} \label{simplifiedOmega}\\
\partial_t \rho &+u\cdot \partial_x \rho =0 \label{simplifiedRho}\\
u(x,t) &=x\int_{{\rm min}(\beta_2x,1)}^{{\rm min}(\beta_1x,1)} \frac{\omega(y,t)}{y}dy-x\int_{{\rm min}(\beta_1x,1)}^{1} \frac{\omega(y,t)}{y}dy\label{nonDefiniteBS}\\
\omega(x,0)&=\omega_0 (x), \,\,\, \rho(x,0)=\rho_0(x) \label{initial}
\end{align}
where $0<\beta_2\leq 1\leq \beta_1<\infty$ are two prescribed parameters.
Note that we effectively limit the meaningful evolution to $(0,1)$ interval since we are interested in dynamics near zero.
Extending integration in the Biot-Savart law beyond $1$ does not add anything essential to the model since the kernel is regular
in the added region, but leads to some technicalities associated with estimating growth of support of $\omega, \rho.$
In what follows below, for the sake of notational simplicity, we will omit the min condition in the limits
of Biot-Savart integral. It should be always understood that if $\beta_{1,2}x \geq 1,$ the integral limits are cut off at $1.$

The model
is close to the CKY model of \cite{CKY}, which can be obtained by setting $\beta_1=\beta_2$ in \eqref{nonDefiniteBS}
and replacing $\rho/x_1$ with $\rho_{x_1}.$
The ``anti blow up" region is $(\beta_1 x,\beta_2 x)$ and it is the part of the integration region closest to $x=0,$ a feature that is also shared by \eqref{bsbous}.
This region also tends to include the largest values of vorticity, making the overall balance highly nontrivial.
The main purpose of this paper is to begin to assemble the technical tools needed for analysis of models with more complex Biot-Savart relationships,
with the eventual goal of getting insight into the workings of the true Biot-Savart laws appearing in the key equations of fluid mechanics such as the 2D Boussinesq system
or the SQG equation.

To set up local well-posedness theory, we will follow \cite{KiselevTan} and use the space $K^n$ of compactly supported in $(0,1)$ functions defined by the norm
$$\|f\|_{K^n}:=\|f\|_{C^n}+(\min_x\{\supp(f)\})^{-1}.$$ Here $n$ is an integer.
This space is well adapted to the mean field forcing term in \eqref{simplifiedOmega}.
We also denote
$K^\infty=\bigcap_{n\geq 1}K^n$.
\begin{theorem}\label{localExistence2}
Given non-negative initial data $(\omega_0,\rho_0)\in K^n((0,1))\times K^n((0,1))$, $n \geq 1,$ there exists $T=T(\omega_0,\rho_0)$ such that the system (\ref{simplifiedOmega})-(\ref{initial}) has a local-in-time unique solution $(\omega,\rho)\in C([0,T],K^n((0,1))\times C([0,T),K^n((0,1)))$.
\end{theorem}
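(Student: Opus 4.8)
The plan is to set up a standard Picard iteration / fixed point argument in the Banach space $K^n$, with the only nonstandard feature being the need to track the left endpoint of the supports of $\omega$ and $\rho$, since the forcing term $\rho/x$ and the $K^n$ norm both degenerate if the support is allowed to reach $x=0$. First I would reformulate the system in Lagrangian coordinates: let $\Phi_t(x)$ be the flow map generated by the (for now, given) velocity field $u$, so that $\rho(\Phi_t(x),t)=\rho_0(x)$ and $\omega$ solves an ODE along trajectories with source $\rho/x$. The key structural observation, inherited from \cite{KiselevTan}, is that the velocity in \eqref{nonDefiniteBS} has the form $u(x,t)=x\,a(x,t)$ with $a$ bounded (indeed $|a(x,t)|\le 2\|\omega(\cdot,t)/y\|_{L^1(\text{supp})}\lesssim \|\omega\|_{K^0}|\log(\min\supp\omega)|$, or more cleanly $|a|\le C\|\omega\|_{C^0}\cdot$(something)), so that trajectories satisfy $\frac{d}{dt}\log\Phi_t(x)=a(\Phi_t(x),t)$; hence $\Phi_t$ scales multiplicatively and cannot move any point to $0$ in finite time. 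Concretely, $\min\supp\omega(\cdot,t)\ge (\min\supp\omega_0)\,e^{-\int_0^t\|a\|_{L^\infty}\,ds}$, which controls the $(\min\supp)^{-1}$ part of the $K^n$ norm and keeps the forcing $\rho/x$ bounded in $C^n$ on the support.

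Next I would define the iteration map $\mathcal{T}$: given $(\omega^{(k)},\rho^{(k)})$ on a short time interval $[0,T]$, compute $u^{(k)}$ from \eqref{nonDefiniteBS}, solve the linear transport equations \eqref{simplifiedOmega}–\eqref{simplifiedRho} with this frozen velocity to get $(\omega^{(k+1)},\rho^{(k+1)})$, and show that for $T$ small depending only on $\|\omega_0\|_{K^n},\|\rho_0\|_{K^n}$ the map preserves a ball in $C([0,T],K^n)^2$ and is a contraction in the lower norm $C([0,T],K^0)^2$ (or $K^{n-1}$). The estimates needed are: (i) the Biot-Savart operator $\omega\mapsto u$ maps $K^n\to K^n$ (with the $x$ prefactor and the $1/y$ weight, differentiating under the integral sign picks up at worst boundary terms at $y=\beta_i x$, which are controlled because $\omega$ and its derivatives are bounded and the integrand $\omega(y)/y$ is evaluated at $y\asymp x\ge\min\supp$); (ii) standard transport estimates: $\|f(\cdot,t)\|_{C^n}$ along a flow with $C^n$ velocity grows at most exponentially, $\frac{d}{dt}\|\omega\|_{C^n}\le C(\|u\|_{C^n})\|\omega\|_{C^n}+\|\rho/x\|_{C^n}$, and $\|\rho/x\|_{C^n}$ is controlled by $\|\rho\|_{C^n}$ and $(\min\supp\rho)^{-1}$; (iii) the support-propagation bound from the previous paragraph. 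Combining (i)–(iii) gives an a priori bound of the form $\|(\omega,\rho)(t)\|_{K^n}\le 2\|(\omega_0,\rho_0)\|_{K^n}$ for $t\le T$ with $T$ depending on the initial norm, closing the invariance of the ball.

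For contraction, I would subtract two iterates and estimate the difference in $K^0$ (or $K^{n-1}$): the difference of velocities is linear in the difference of vorticities and bounded in the same weighted sense, the difference of flow maps is controlled by Gronwall, and the transport structure gives $\|(\delta\omega,\delta\rho)(t)\|_{K^0}\le CT\|(\delta\omega,\delta\rho)\|_{C([0,T],K^0)}$, a contraction for $T$ small. This yields a fixed point $(\omega,\rho)\in C([0,T],K^0)^2$; combined with the uniform-in-$k$ bound in $K^n$ and interpolation one upgrades regularity to $C([0,T],K^n)^2$ in the stated sense, and uniqueness follows from the same difference estimate applied to two genuine solutions. Preservation of non-negativity of $\omega$ and $\rho$ is automatic: $\rho$ is transported so stays non-negative, and $\omega$ satisfies an ODE along trajectories with non-negative source $\rho/x$ and non-negative initial value, hence stays non-negative. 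I expect the main obstacle to be bookkeeping the interaction between the moving cutoffs $\min(\beta_i x,1)$ in the Biot-Savart law and differentiation in $x$: one must check that the kernel's $x$-dependence through these limits does not destroy the $K^n\to K^n$ mapping property. This is handled by noting that at the transition $\beta_i x=1$ the integrand $\omega(y)/y$ is merely bounded (not singular), so the boundary terms produced by differentiation are harmless; away from the transition the cutoffs are locally constant or linear and differentiation is routine. A secondary technical point is that $\|a\|_{L^\infty}$ involves a logarithm of $\min\supp\omega$, so the support lower bound and the norm bound must be closed simultaneously in a single Gronwall argument rather than sequentially, but on a short time interval this is harmless.
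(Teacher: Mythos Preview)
Your proposal is correct and follows essentially the same approach as the paper: both reduce local well-posedness to controlling the left endpoint $\delta(t)$ of the support via the structure $u(x,t)=x\cdot a(x,t)$, use the Lagrangian formula for $\omega$ to bound the velocity, and invoke an iterative approximation scheme as in \cite{KiselevTan}. The paper is in fact less detailed than your outline---it only states the key a-priori inequality $\partial_t\Psi\le C\Psi(1+te^{\Psi})$ for $\Psi(x,t)=\sup_{s\le t}\log(1/\Phi(x,s))$ (Lemma~\ref{lemma1}) and then defers the actual iteration to \cite{KiselevTan}.
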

\it Remark. \rm It is not difficult to remove the non-negative assumption on the initial data. We do not pursue the most general case here since proving
finite time singularity formation is our main objective. \\

\begin{theorem}\label{NonDefiniteBlowUp}
Assume that $\beta_2\leq1\leq\beta_1<2\beta_2$. There exist compactly supported $(\omega_0,\rho_0) \in K^\infty((0,1))\times K^\infty((0,1))$ such that the corresponding solution of
(\ref{simplifiedOmega})-(\ref{initial}) blows up in finite time in the sense that  $$\int_0^{T}\|\omega(\cdot,t)\|_{L^\infty}dt=\infty,\qquad \int_0^{T}\|\partial_x\rho(\cdot,t)\|_{L^\infty}dt=\infty, \qquad \int_0^{T}\|\partial_x u(\cdot,t)\|_{L^\infty}dt=\infty$$
for some $T\in(0,\infty)$.
\end{theorem}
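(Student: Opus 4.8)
The plan is to build initial data for which concentration of $\rho$ near the origin and growth of $\omega$ reinforce one another. Work in Lagrangian coordinates: let $\Phi_t$ denote the flow of the velocity field in \eqref{nonDefiniteBS}. Equation \eqref{simplifiedRho} gives $\rho(\Phi_t(x),t)=\rho_0(x)$, and integrating \eqref{simplifiedOmega} along trajectories yields $\omega(\Phi_t(x),t)=\omega_0(x)+\rho_0(x)\int_0^t\Phi_s(x)^{-1}\,ds$. In particular $\supp\omega(\cdot,t)=\Phi_t(\supp\omega_0\cup\supp\rho_0)$, and since $\Phi_t$ is order preserving on $(0,1)$ the entire geometry is governed by finitely many trajectories. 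I would take $\omega_0,\rho_0\ge0$ smooth and supported in a small interval $[p_0,b_0]\subset(0,1)$ with $b_0$ chosen small enough that the $\min$ cut-offs in \eqref{nonDefiniteBS} are initially inactive; $\omega_0$ of size $\varepsilon\ll1$; and $\rho_0$ a bump whose support starts at $p_0$, which stays below $\varepsilon m$ near $p_0$, equals a constant $m>0$ on a plateau $[\check p_0,q_0]$ lying well to the right of $p_0$, and then descends to $0$ on $[q_0,b_0]$. The size $m$, the parameter $\varepsilon$, and the initial ratios $\check p_0/p_0,\ q_0/\check p_0,\ b_0/q_0$ are to be fixed at the end. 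Write $p(t)=\Phi_t(p_0)$, $\check p(t)=\Phi_t(\check p_0)$, $q(t)=\Phi_t(q_0)$, $b(t)=\Phi_t(b_0)$. Because the first (``anti blow up'') integral in \eqref{nonDefiniteBS} starts at $\beta_2 x\le x$, at the left edge of the support it only sees the values of $\omega$ on $(p(t),\beta_1 p(t))$, which are small by construction, whereas the second integral picks up the plateau; this is what will make $u$ strongly negative near $p(t)$.

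The core of the argument is a continuity/bootstrap on the maximal existence interval $[0,T^*)$ of Theorem \ref{localExistence2}, preserving a configuration of the following type: (a) $\beta_2 p(t)$ stays to the left of $\supp\omega(\cdot,t)$ and $b(t)$ stays below $1$; (b) the aspect ratios $\check p(t)/p(t)$, $q(t)/\check p(t)$, $b(t)/q(t)$ stay above fixed constants larger than $\beta_1/\beta_2$ and $\beta_1$, while $q(t)/p(t)$ stays below a fixed constant; (c) $\omega(\cdot,t)$ is at most $\varepsilon m\int_0^t\Phi_s(p_0)^{-1}\,ds+\varepsilon$ on $[p(t),\check p(t)]$ and at least $W(t):=m\int_0^t q(s)^{-1}\,ds$ on $[\check p(t),q(t)]$; (d) $p(t)$ is non-increasing. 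Property (c) is immediate from the Lagrangian formula, (d) follows from (a)--(b), and the ratio bounds in (b) are the crux. To control a ratio one differentiates its logarithm along the flow, inserts the explicit kernel of \eqref{nonDefiniteBS}, and finds that the leading term is an integral of $\omega(y,t)/y$ over an interval straddling the plateau --- hence bounded below by a constant multiple of $W(t)$ --- while the remaining terms are integrals over the slow part of the ramp, bounded above by $\varepsilon m\log\beta_1\int_0^t\Phi_s(p_0)^{-1}\,ds$; since (b) keeps $q(t)/p(t)$ bounded, $W(t)$ and $\int_0^t\Phi_s(p_0)^{-1}\,ds$ are comparable, and for $\varepsilon$ small the sign comes out right. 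The hypothesis $\beta_1<2\beta_2$ enters exactly here: it is what permits the plateau-straddling intervals $(\beta_2\xi,\beta_1\xi)$ to be at once disjoint from the small ramp intervals $(\Phi_t(p_0),\beta_1\Phi_t(p_0))$ and comparable to them in log-length. I expect the hardest part of the whole proof to be exhibiting one choice of profile together with the parameters $(\varepsilon,m,\check p_0/p_0,q_0/\check p_0,b_0/q_0)$ for which all of (a)--(d) close simultaneously, since the several length scales impose competing constraints.

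With the configuration in hand, the collapse is forced by a Riccati-type inequality. From (b)--(c) the inner integral in $u(p(t),t)$ is at most $\varepsilon m\log\beta_1\int_0^t\Phi_s(p_0)^{-1}\,ds$ and the outer integral is at least $W(t)\log(q(t)/\check p(t))$, so, after picking $\varepsilon$ small relative to the fixed ratios, $\tfrac{d}{dt}\log p(t)=u(p(t),t)/p(t)\le-\kappa\int_0^t p(s)^{-1}\,ds$ for some $\kappa>0$. Setting $L(t):=\log(p_0/p(t))\ge0$ this reads $L'(t)\ge(\kappa/p_0)\int_0^t e^{L(s)}\,ds$; writing $F(t):=\int_0^te^{L(s)}\,ds$ gives $F''\ge(\kappa/p_0)FF'$, hence $F'\ge(\kappa/2p_0)F^2$ and $F(t)\to\infty$ at some finite $T^*$, with $e^{L(t)}=F'(t)\gtrsim(T^*-t)^{-2}$ near $T^*$. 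Thus $p(t)\to0$ and $\int_0^t p(s)^{-1}\,ds\gtrsim(T^*-t)^{-1}$.

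It remains to read off the three divergences. For $\partial_x u$: for $x$ just left of $\supp\omega(\cdot,t)$ one has $u(x,t)/x=-\int\omega(y,t)y^{-1}\,dy$, so $\|\partial_xu(\cdot,t)\|_{L^\infty}\ge\int\omega(y,t)y^{-1}\,dy$, which is at least a multiple of $W(t)$ and, by the above, not integrable on $(0,T^*)$. For $\omega$: \eqref{simplifiedOmega} gives $\|\omega(\cdot,t)\|_{L^\infty}\ge\rho_0(x)\int_0^t\Phi_s(x)^{-1}\,ds$ for every $x$, and one selects the trajectory realizing the optimal trade-off between size of $\rho_0(x)$ and smallness of $\Phi_s(x)$; controlling $\check p(t)/p(t)$ (again via $\varepsilon$ small and $\beta_1<2\beta_2$) shows this trajectory collapses fast enough that $\int_0^{T^*}\|\omega(\cdot,t)\|_{L^\infty}\,dt=\infty$ --- this step is delicate, and is where the precise tuning of the profile near $p_0$ matters. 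For $\partial_x\rho$: $\|\partial_x\rho(\cdot,t)\|_{L^\infty}=\sup_x|\partial_x\rho_0(x)|/\partial_x\Phi_t(x)$, and since $\Phi_t$ compresses the fixed interval $[p_0,q_0]$ onto the vanishing interval $[p(t),q(t)]$ and $\partial_x\Phi_t$ depends continuously on $x$, one forces $\partial_x\Phi_t(x)\to0$ at some $x$ adjacent to the descent region $[q_0,b_0]$, where $\partial_x\rho_0\ne0$; quantifying this via $\partial_x\Phi_t(x)=\exp\bigl(\int_0^t\partial_xu(\Phi_s(x),s)\,ds\bigr)$ gives the divergence. Finally, once $b(t)$ approaches $1$ and the $\min$ cut-offs in \eqref{nonDefiniteBS} switch on, a short additional argument --- as anticipated in the remark following \eqref{initial}, nothing essential happens there --- closes the bookkeeping.
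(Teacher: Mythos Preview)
Your bootstrap cannot close as stated: the hypothesis that $q(t)/p(t)$ stays below a fixed constant is false. Computing directly,
\[
\frac{d}{dt}\log\frac{q(t)}{p(t)}=\frac{u(q,t)}{q}-\frac{u(p,t)}{p}
=\int_{\beta_2 q}^{\beta_1 q}\frac{\omega}{y}\,dy-\int_{p}^{\beta_1 p}\frac{\omega}{y}\,dy+\int_{\beta_1 p}^{\beta_1 q}\frac{\omega}{y}\,dy,
\]
and under your own configuration (a)--(c) the first and third integrals each contain a full plateau piece of size $\gtrsim W(t)$, while the middle term is only $\lesssim \varepsilon m\log\beta_1\int_0^t p(s)^{-1}\,ds$. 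If $q/p$ were bounded by $C$, the middle term would be $\leq \varepsilon C\log\beta_1\cdot W(t)$, so for small $\varepsilon$ the right-hand side is strictly positive and of order $W(t)$. Hence $\log(q/p)$ grows at rate $\gtrsim W(t)$ and blows up along with $W$, contradicting (b). Once $q/p$ is unbounded, your ramp bound $\omega\leq \varepsilon m\int_0^t p(s)^{-1}\,ds$ no longer dominates $W(t)=m\int_0^t q(s)^{-1}\,ds$, and the sign arguments for the other ratios and for the Riccati inequality collapse.

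This is not a cosmetic issue: the phenomenon you are fighting is exactly what the paper \emph{exploits}. In the logarithmic variable $z=-\log x$ the quantity $\partial_z u/\partial_z\Phi$ equals $2\omega(\Phi-\gamma_1)-\omega(\Phi+\gamma_2)$; the paper proves this stays positive (Propositions~\ref{positivityProp1}--\ref{positivityProp2}), which is equivalent to $\partial_z\Phi\geq 1$, i.e.\ to ratios $\Phi(x_1,t)/\Phi(x_2,t)$ being \emph{increasing} in $t$ for $x_1>x_2$. The hypothesis $\beta_1<2\beta_2$ enters precisely here, as the inequality $2e^{-\gamma_1-\gamma_2}>1$ needed so that $2e^{\Phi(z_-,s)}-e^{\Phi(z_+,s)}>0$ when $\Phi(z_+,s)-\Phi(z_-,s)\leq\gamma_1+\gamma_2$; it is not a statement about disjointness or log-lengths of intervals. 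The blow-up is then obtained not from a Riccati inequality for a single trajectory but from a comparison of $\partial_z\Phi$ with an auxiliary function $f$ solving $\partial_t f=c\int_0^t e^{\int_{L_2}^z f}$ (Lemma~\ref{compareToF}), whose finite-time blow-up is forced by choosing the plateau long enough. Finally, the three divergences are not read off separately as you attempt; they follow at once from the Beale--Kato--Majda-type equivalence in Proposition~\ref{BKMcriterion}.
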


\it Remark. \rm
The assumption $\beta_2 \leq 1 \leq \beta_1<2\beta_2$ is necessary for the current argument to yield finite time singularity formation.
It seems likely that this condition is not sharp, but new ideas are needed to improve the blow up parameter range.

\section{Local well-posedness and continuation criteria}\label{lwp}

The proof of the local existence Theorem \ref{localExistence2}
for the model (\ref{simplifiedOmega})-(\ref{initial}) can be carried out with essentially the same argument as in \cite{KiselevTan}, so we will provide just a sketch of the proof for the sake of brevity.
The key is to control the distance from the support of the solution to the origin. It is not hard to see that while this distance remains positive, the system
(\ref{simplifiedOmega})-(\ref{initial}) has well controlled forcing and Biot-Savart law, and the solutions retain original regularity.

Denote this distance by
$$\delta(t):=\min_x\{\supp(\omega)\cup\supp(\rho). \}$$
The next lemma explains how we can bound $\delta(t)$ away from zero for at least a short period of time.
This is an a-priori estimate; to properly show local existence of solutions and the associated bounds one needs to use an iterative approximation
scheme similar to \cite{KiselevTan}.
Let $\Phi(x,t)$ be particle trajectories defined as usual by
\begin{equation}\label{trajectories} \partial_t \Phi(x,t) = u(\Phi(x,t), t), \,\,\,\Phi(x,0)=x. \end{equation}
\begin{lemma}\label{lemma1}
Suppose that $\omega_0, \rho_0$ are as in the assumption of Theorem~\ref{localExistence2}, and let $\omega, \rho \in C(K_n,[0,T])$ solve (\ref{simplifiedOmega})-(\ref{initial}).
Write $$\Psi(x,t)=\sup_{s\leq t}\log(1/\Phi(x,s)).$$
Then $\Psi(x,t)$ satisfies
\begin{equation}\label{upperpsibound}
\partial_t \Psi(x,t) \leq C \Psi(x,t)(1+t e^{\Psi(x,t)}), \,\,\, \Psi(x,0)= \log x^{-1}.
\end{equation}
Therefore, there exists $T>0$ such that for $0 \leq t \leq T,$ $\Psi(x,t)$ remains finite for all $x \in {\rm supp}(\omega_0, \rho_0).$
\end{lemma}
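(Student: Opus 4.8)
\emph{Sketch of the argument.} The plan is to propagate the running supremum $\Psi$ along particle trajectories, convert the Biot--Savart bound into a closed scalar differential inequality, and close it by a short-time continuity argument. First I would exploit the transport structure: $\rho$ solves \eqref{simplifiedRho} with $\rho_0\ge0$, so $0\le\rho(\cdot,t)\le\|\rho_0\|_{L^\infty}$ and $\rho(\Phi(x,t),t)=\rho_0(x)$; substituting into \eqref{simplifiedOmega} and integrating along $\Phi$ gives the exact identity
\begin{equation*}
\omega(\Phi(x,t),t)=\omega_0(x)+\rho_0(x)\int_0^t\frac{ds}{\Phi(x,s)}.
\end{equation*}
Writing $M(t):=\sup_{x\in\supp(\omega_0,\rho_0)}\Psi(x,t)$ and using $\log(1/\delta(s))=\sup_x\log(1/\Phi(x,s))\le\sup_x\Psi(x,s)=M(s)$, hence $\delta(s)^{-1}\le e^{M(s)}$, together with the monotonicity of $M$, this yields
\begin{equation*}
\|\omega(\cdot,t)\|_{L^\infty}\le\|\omega_0\|_{L^\infty}+\|\rho_0\|_{L^\infty}\int_0^t\frac{ds}{\delta(s)}\le C_0\bigl(1+t\,e^{M(t)}\bigr),
\end{equation*}
with $C_0:=\max(\|\omega_0\|_{L^\infty},\|\rho_0\|_{L^\infty})$.

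Next I would estimate the velocity. Splitting the two integrals in \eqref{nonDefiniteBS}, bounding $|\omega|$ by $\|\omega(\cdot,t)\|_{L^\infty}$, and integrating the weight $1/y$ gives
\begin{equation*}
|u(x,t)|\le C\,\|\omega(\cdot,t)\|_{L^\infty}\,x\,\bigl(1+\log(1/x)\bigr)
\end{equation*}
for $x$ in the range relevant to the evolution --- the $\min(\cdot,1)$ cutoffs merely shorten the integration intervals and cannot enlarge this bound, and on a short time interval the support stays in a fixed compact subinterval of $(0,\infty)$, so $\log(1/x)$ is bounded below there. Dividing by $\Phi$, this controls $\bigl|\tfrac{d}{dt}\log(1/\Phi(x,t))\bigr|=|u(\Phi(x,t),t)|/\Phi(x,t)$ by $C\|\omega(\cdot,t)\|_{L^\infty}(1+\log(1/\Phi(x,t)))$. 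Passing to the running maximum $\Psi(x,t)=\sup_{s\le t}\log(1/\Phi(x,s))$ is what the sign-changing kernel forces: $\Phi(x,\cdot)$ need not be monotone, so neither is $\log(1/\Phi(x,\cdot))$, yet it is precisely the running maximum that governs the integral $\int_0^t\delta(s)^{-1}\,ds$ above. Since $\Psi(x,\cdot)$ is nondecreasing, its upper Dini derivative vanishes at times where the supremum is attained strictly before $t$, and at times where it is attained at $t$ one has $\log(1/\Phi(x,t))=\Psi(x,t)$; in both cases
\begin{equation*}
\partial_t\Psi(x,t)\le C\,\|\omega(\cdot,t)\|_{L^\infty}\bigl(1+\Psi(x,t)\bigr)\le C\,C_0\bigl(1+t\,e^{M(t)}\bigr)\bigl(1+\Psi(x,t)\bigr).
\end{equation*}
Using $\Psi(x,t)\ge\Psi(x,0)=\log(1/x)\ge c_0>0$ (legitimate since the support is compactly contained in $(0,1)$) to absorb the $+1$'s, this is the differential inequality \eqref{upperpsibound} --- more precisely with $e^{M(t)}$ in place of $e^{\Psi(x,t)}$, which is the quantity one actually controls and which is equivalent for the finiteness conclusion below --- and taking the supremum over $x$ gives the closed form $\partial_tM(t)\le C\,M(t)(1+t\,e^{M(t)})$, with $M(0)=\log(1/\delta(0))<\infty$.

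Finally I would read off the conclusion from this scalar inequality. Put $T:=\min\bigl(e^{-2M(0)},(\log2)/(2C)\bigr)$; a standard continuity argument shows $M(t)\le2M(0)$ on $[0,T]$, since on the largest subinterval where this holds one has $t\,e^{M(t)}\le1$, hence $\partial_tM\le2CM$ and $M(t)\le M(0)e^{2Ct}<2M(0)$, so the subinterval cannot be proper. Therefore $M(t)$, and with it $\delta(t)\ge e^{-M(t)}$, stays finite and bounded away from $0$ on $[0,T]$ --- the a priori estimate that, as the text explains, keeps the forcing $\rho/x$ and the Biot--Savart law under control and preserves $K^n$ regularity (the fully rigorous version runs this estimate inside the iteration scheme, as in \cite{KiselevTan}). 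I expect the main obstacle to be not any individual estimate --- the Biot--Savart bound and the along-trajectory identity for $\omega$ are routine --- but the bookkeeping around $\Psi$: justifying the differential inequality for a supremum of trajectory quantities (an upper-Dini-derivative Gronwall argument), and reconciling the pointwise-in-$x$ form \eqref{upperpsibound} with the fact that the bound on $\|\omega\|_{L^\infty}$ genuinely involves the global quantity $\delta(t)=\min_x\Phi(x,t)$, which is what forces the passage to $M(t)=\sup_x\Psi(x,t)$.
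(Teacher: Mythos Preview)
Your argument is correct for the finiteness conclusion, but the route differs from the paper's in one substantive way, and as a result you do not obtain \eqref{upperpsibound} exactly as stated.

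The paper does \emph{not} pass through $\|\omega(\cdot,t)\|_{L^\infty}$. After dropping the positive piece of the Biot--Savart law (using $\omega\ge0$ and $\beta_1\ge1$) it keeps the integral
\[
\frac{d}{dt}\log\frac{1}{\Phi(x,t)}\le\int_{\Phi(x,t)}^{1}\frac{\omega(y,t)}{y}\,dy
\]
and then bounds $\omega(y,t)$ \emph{pointwise} using the trajectory identity together with the order-preserving property of the flow: for $y\in[\Phi(x,t),1]$ one has $\Phi^{-1}(y,t)\ge x$, hence $\Phi(\Phi^{-1}(y,t),s)\ge\Phi(x,s)$ for every $s\le t$, which gives $1/\Phi(\Phi^{-1}(y,t),s)\le e^{\Psi(x,t)}$. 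Thus $\omega(y,t)\le C(1+t\,e^{\Psi(x,t)})$ for all $y$ in the integration range, and integrating the weight $1/y$ produces the factor $\Psi(x,t)$. This yields the genuinely \emph{pointwise} inequality $\partial_t\Psi(x,t)\le C\,\Psi(x,t)(1+t\,e^{\Psi(x,t)})$, with no reference to the global quantity $M(t)$.

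Your approach replaces this monotonicity trick by the cruder bound $\omega(y,t)\le\|\omega(\cdot,t)\|_{L^\infty}\le C_0(1+t\,e^{M(t)})$, which---as you correctly note---forces the appearance of $M(t)$ on the right and only gives \eqref{upperpsibound} after taking the supremum over $x$. That is perfectly adequate for the ``Therefore'' clause, and your closing continuity argument for $M$ is fine. What you lose is the local-in-$x$ form of the inequality; what you gain is that you never need the order-preservation of trajectories, so your estimate would survive in settings where that monotonicity is unavailable. If you want to match the lemma verbatim, insert the one-line observation above in place of your $\|\omega\|_{L^\infty}$ bound.
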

\begin{proof}
Solving the equations along trajectories $\Phi$ defined in (\ref{trajectories}) we obtain
\begin{equation} \label{alongTrajectories1}
\rho(x,t)=\rho_0(\Phi^{-1}(x,t)),\quad \omega(x,t)=\omega_0(\Phi^{-1}(x,t))+\rho_0(\Phi^{-1}(x,t))\int_{0}^t \frac{1}{\Phi(\Phi^{-1}(x,t),s)}ds
\end{equation}
This in particular indicates preservation of non-negavity of $\rho$ and $\omega$ by the evolution.

Due to positivity of $\omega$ and $\beta_1 \geq 1$ we have
$$\frac{d}{dt}\Phi(x,t) \geq-\Phi(x,t) \int_{\Phi(x,t)}^{1}\frac{\omega(y,t)}{y}dy\,\,\,\Longrightarrow \,\,\,
\frac{d}{dt} \log(1/\Phi(x,t))\leq\int_{\Phi(x,t)}^{1}\frac{\omega(y,t)}{y}dy.$$


Now by (\ref{alongTrajectories1})
\begin{equation}
\omega(y,t)\leq \|\omega_0\|_{L^\infty}+\|\rho_0\|_{L^\infty}\int_{0}^{t} \frac{1}{\Phi(\Phi^{-1}(y,t),s)}ds\leq C\bigg(1+\int_{0}^{t} \frac{1}{\Phi(\Phi^{-1}(y,t),s)}ds\bigg)
\end{equation}
Also, if $y\in [\Phi(x,t),1]$, then $\Phi^{-1}(y,t)\in[x,1]$. Since the trajectories cannot cross while solution remains regular, we have
$$\frac{1}{\Phi(\Phi^{-1}(y,t),s)}\leq\frac{1}{\Phi(x,s)}\leq e^{\Psi(x,t)}$$
Therefore
$$\partial_t \Psi(x,t)\leq C\int_{\Phi(x,t)}^{1} \frac{1}{y}\bigg(1+ \int_{0}^{t}e^{\Psi(x,t)}ds\bigg)dy \leq C\Psi(x,t)(1+te^{\Psi(x,t)})$$
yielding \eqref{upperpsibound}.
\end{proof}
Note that $\delta(t) = e^{-\Psi(\delta(0),t)},$ so Lemma~\ref{lemma1} allows control of $\delta(t)$ for $t \leq T$ (and so implies regularity of the solution).

The proposition that we prove next is an analogue of the well-known result due to Beale-Kato-Majda \cite{BKM}. It will provide continuation criteria for solutions.

\begin{proposition} \label{BKMcriterion}
Let $n \geq 1$ be an integer. The following are equivalent:\\
(a) The solution $(\omega,\rho)\in C([0,T),K^n)\times C([0,T),K^n)$ can be continued past $T$ \\
(b) $\int_{0}^T \|\partial_x u(\cdot,t)\|_{L^\infty}dt<\infty$\\
(c) $\int_{0}^T \|\partial_x\rho(\cdot,t)\|_{L^\infty}dt<\infty$\\
(d) $\int_{0}^T \|\omega(\cdot,t)\|_{L^\infty}dt<\infty$\\
(e) $\liminf_{t \rightarrow T} \delta(t)>0.$
\end{proposition}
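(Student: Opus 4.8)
The plan is to close the cycle of implications $(a)\Rightarrow(b)\Rightarrow(c)\Rightarrow(d)\Rightarrow(e)\Rightarrow(a)$, which then yields all five equivalences at once. Two facts will be used throughout: while the solution stays regular, $\Phi(\cdot,t)$ is an order-preserving diffeomorphism, so $\delta(t)=\Phi(p,t)$ for the single particle $p=\min(\supp\omega_0\cup\supp\rho_0)$ and the representation \eqref{alongTrajectories1} is available; and, writing $u(x,t)=x\,A(x,t)$ for the bracket $A$ in \eqref{nonDefiniteBS}, differentiation gives $\partial_x u=A+2\omega(\beta_1 x,t)-\omega(\beta_2 x,t)$ (with the boundary terms at an argument $\ge 1$ absent), so $\|\partial_x u(t)\|_{L^\infty}\le C\|\omega(t)\|_{L^\infty}\bigl(1+\log(1/\delta(t))\bigr)$. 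Given this, $(a)\Rightarrow(b)$ is immediate: on a closed interval of regularity $\|\omega(t)\|_{L^\infty}$ and $\delta(t)$ are continuous with $\delta$ positive, so $\|\partial_x u(t)\|_{L^\infty}$ is bounded, hence integrable. For $(b)\Rightarrow(c)$ I would differentiate \eqref{simplifiedRho}: $\partial_x\rho$ solves $\partial_t(\partial_x\rho)+u\,\partial_x(\partial_x\rho)=-(\partial_x u)\,\partial_x\rho$, so along trajectories $\|\partial_x\rho(t)\|_{L^\infty}\le\|\partial_x\rho_0\|_{L^\infty}\exp\bigl(\int_0^t\|\partial_x u\|_{L^\infty}\,ds\bigr)$, which is bounded on $[0,T)$ under (b).

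The crucial step is $(c)\Rightarrow(d)$. Setting $B=\Phi^{-1}(x,t)$, formula \eqref{alongTrajectories1} reads $\omega(x,t)=\omega_0(B)+\rho_0(B)\int_0^t\Phi(B,s)^{-1}\,ds$. The observation that makes this tractable is that, since $\rho$ is transported, $\rho_0(B)=\rho(\Phi(B,s),s)$ for every $s\le t$, so the singular forcing integral equals $\int_0^t\rho(\Phi(B,s),s)\,\Phi(B,s)^{-1}\,ds$; and because $\rho(\cdot,s)\ge 0$ vanishes on $(0,\delta(s)]\subset(0,\ell(s))$, the fundamental theorem of calculus gives $\rho(y,s)\le\|\partial_x\rho(s)\|_{L^\infty}\,y$ for all $y>0$, hence $\rho(\Phi(B,s),s)/\Phi(B,s)\le\|\partial_x\rho(s)\|_{L^\infty}$. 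This yields the clean bound $\|\omega(t)\|_{L^\infty}\le\|\omega_0\|_{L^\infty}+\int_0^t\|\partial_x\rho(s)\|_{L^\infty}\,ds$, so (c) forces $\sup_{[0,T)}\|\omega(t)\|_{L^\infty}<\infty$, which is (d).

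For $(d)\Rightarrow(e)$ I would simply rerun the estimate of Lemma~\ref{lemma1}: positivity of $\omega$ and $\beta_1\ge 1$ give $\frac{d}{dt}\log(1/\delta(t))\le\int_{\delta(t)}^1\frac{\omega(y,t)}{y}\,dy\le\|\omega(t)\|_{L^\infty}\log(1/\delta(t))$, and Gronwall keeps $\delta(t)$ bounded below on $[0,T)$. Finally, $(e)\Rightarrow(a)$: once $\delta(t)\ge\delta_*>0$ on $[0,T)$, the forcing obeys $\|\rho(t)/x\|_{L^\infty}\le\|\rho_0\|_{L^\infty}/\delta_*$, the Biot--Savart kernel in \eqref{nonDefiniteBS} is smooth on $[\delta_*,1]$, and $\|\omega(t)\|_{L^\infty}\le\|\omega_0\|_{L^\infty}+\|\rho_0\|_{L^\infty}T/\delta_*$ by \eqref{alongTrajectories1}; feeding these into the standard $K^n$ energy estimates — bounding $\|\partial_x^k\omega(t)\|_{L^\infty}$ and $\|\partial_x^k\rho(t)\|_{L^\infty}$ inductively in $k$ by Gronwall, with $(\min\supp)^{-1}\le 1/\delta_*$ — gives a uniform bound on $\|\omega(t)\|_{K^n}+\|\rho(t)\|_{K^n}$ up to $T$, so restarting Theorem~\ref{localExistence2} at a time close to $T$ continues the solution past $T$.

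I expect $(c)\Rightarrow(d)$ to be the main obstacle: a priori, control of $\partial_x\rho$ looks weaker than control of $\omega$ or of the support distance, and the device that breaks the apparent circularity is the elementary pointwise inequality $\rho(y,t)\le y\,\|\partial_x\rho(t)\|_{L^\infty}$, valid because $\rho$ is non-negative and supported away from the origin; this is precisely what converts integrability of $\partial_x\rho$ into control of the singular forcing $\rho/\Phi$ along trajectories. The step $(e)\Rightarrow(a)$ is the most computation-heavy but is routine and entirely parallel to the local well-posedness argument and to \cite{KiselevTan}; the remaining implications are short.
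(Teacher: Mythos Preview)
Your proposal is correct and follows essentially the same route as the paper. The paper organizes the implications a bit differently (it first argues $(a)\Leftrightarrow(e)$ and that $(e)$ implies all the rest, then shows $(b)\Rightarrow(a)$, $(b)\Rightarrow(c)$, $(c)\Rightarrow(d)$, $(d)\Rightarrow(e)$), and for $(d)\Rightarrow(e)$ it passes through the bound $|\partial_x u|\le C\|\omega\|_{L^\infty}(1+|\log\Phi|)$ combined with \eqref{graducon} rather than invoking the Lemma~\ref{lemma1} estimate directly as you do; but the substantive steps---the Gronwall for $\partial_x\rho$, the pointwise bound $\rho(y,s)\le y\,\|\partial_x\rho(s)\|_{L^\infty}$ in $(c)\Rightarrow(d)$, and the log-Gronwall for $\delta(t)$---are identical.
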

\begin{proof}
The equivalence of (a) and (e) follows from the definition of the norm $K^n$ and the above discussion on how positive $\delta(t)$ ensures local existence of solution
in $K^n$ on a time interval depending only on the size of $\delta.$
In fact, (e) implies all other conditions in the lemma by the argument mentioned above: the solution supported away from $x=0$ uniformly in a given time interval maintains regularity
by straightforward estimates.

Equivalence between (a) and (b) can be obtained through a standard argument based on the Lagrangian formulation of the system.
Note that we only need to show (b) implies (a). A standard estimate on the trajectories, using the fact that the origin is a fixed point of the flow, yields
\begin{equation}\label{graducon} \delta'(t) = \frac{d}{dt} \Phi(\delta(0),t) \geq -\|\partial_x u(\cdot,t)\|_{L^\infty} \delta(t). \end{equation}
Thus by Gronwall, \[ \delta(t) \geq \delta(0)\exp \left(-\int_0^T \|\partial_x u (\cdot, t)\|_{L^\infty}\,dt\right). \]

To prove the implication $(b)\Rightarrow (c)$ , differentiate (\ref{simplifiedRho}) and compose with $\Phi$ to get
$$\frac{d}{dt}\partial_x\rho(\Phi(x,t),t)=-\partial_xu(\Phi(x,t),t)\partial_x\rho(\Phi(x,t),t)$$
Thus $$\frac{d}{dt}\|\partial_x \rho\|_{L^\infty}\leq \|\partial_x u\|_{L^\infty}\|\partial_x\rho\|_{L^\infty}$$
to which we can again apply Gr\"onwall's inequality and establish $(b)\Rightarrow (c)$.

The implication $(c)\Rightarrow (d)$ follows  from integrating (\ref{simplifiedOmega}) in Lagrangian coordinates and estimating
(using $\rho(0,t)=0$, before blow up)
$$|\omega(\Phi(x,t),t)|=\bigg|\omega_0(x)+\int_{0}^{t}\frac{\rho(\Phi(x,s),s)}{\Phi(x,s)}ds\bigg| \leq \|\omega_0\|_{L^\infty}+\int_{0}^{t}\frac{\|\partial_x\rho(\cdot,s)\|_{L^\infty}\cdot |\Phi(x,s)|}{|\Phi(x,s)|}ds$$
for all $x$.

To show $(d)\Rightarrow(e)$, assume solution exists up to $T$ and $\int_{0}^T \|\omega(\cdot,t)\|_{L^\infty}dt=M$. 
Observe that differentiating \eqref{nonDefiniteBS} we obtain
\begin{align} \label{gradUbyOmega}
|\partial_x u(\Phi(x,t),t)| \leq \left|\int_{\beta_2\Phi(x,t)}^{1}\frac{\omega(y,t)}{y}dy\right| +C\|\omega\|_{L^\infty}&\leq C\|\omega(\cdot,t)\|_{L^\infty}(1+|\log \Phi(x,t)|),
\end{align}
where $C$ depends only on $\beta_{1,2}.$
Taking $x=\delta(0)$ and Combining (\ref{graducon}) and (\ref{gradUbyOmega}), we see that
$$\frac{d}{dt}\delta(t)\geq -C\|\omega(\cdot,t)\|_{L^\infty}\delta(t)(1+\log(1/\delta(t)))$$
So \[ \log \delta(t)^{-1} \leq \log \delta(0)^{-1} e^{C\int_0^t \|\omega(\cdot, s)\|_{L^\infty}\,ds} + \left(e^{C\int_0^t \|\omega(\cdot, s)\|_{L^\infty}\,ds}-1 \right), \]
finishing the proof.
\end{proof}

\section{Warming-up: A special case with sign-definite Biot-Savart law}

As a warm-up, let us first take a look at a special case of the model (\ref{simplifiedOmega})-(\ref{initial}) by further simplifying the Biot-Savart law. Take $\beta_1=\beta_2=1$ and consider
the following model on unit interval $[0,1]:$
\begin{align}
\partial_t \omega &+u\cdot \partial_x \omega =\frac{\rho}{x}\label{specialCase1} \\
\partial_t \rho &+u\cdot \partial_x \rho =0\label{specialCase2} \\
u(x,t) &=-x\int_{x}^{1} \frac{\omega(y,t)}{y}dy\label{negativeBS}\\
\omega(x,0)&=\omega_0 (x), \rho(x,0)=\rho_0(x) \label{specialCaseInitial}
\end{align}
The model then becomes close to the CKY model, but even easier due to simpler forcing term. The proof of blow up is very transparent.



\begin{theorem}\label{DefiniteBlowUp}
There exists $(\omega_0,\rho_0) \in K^\infty((0,1))\times K^\infty((0,1))$ such that the corresponding solution of (\ref{specialCase1})-(\ref{specialCaseInitial}) blows up in finite time in the sense that  $$\int_0^{T}\|\omega(\cdot,t)\|_{L^\infty}dt=\infty,\qquad \int_0^{T}\|\partial_x\rho(\cdot,t)\|_{L^\infty}dt=\infty, \qquad \int_0^{T}\|\partial_x u(\cdot,t)\|_{L^\infty}dt=\infty$$
for some $T\in(0,\infty)$.
\end{theorem}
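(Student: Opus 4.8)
The plan is to reduce the near-origin dynamics to a scalar second-order ODE of the form $\ddot L\ge c\,e^{L}$, which blows up in finite time, and then invoke Proposition~\ref{BKMcriterion}.

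Fix $0<a<b<1$ and take $\omega_0\in K^\infty$ non-negative, not identically zero, with $\supp\omega_0=[a,b]$, and $\rho_0\in K^\infty$ non-negative, supported in $(0,1)$, with $\rho_0\equiv c_0$ on $[a,b]$ for some $c_0>0$; the $L^1$ size of $\omega_0$ will be taken large at the end. I work on the maximal existence interval $[0,T^*)$ from Theorem~\ref{localExistence2}, write $G(x,t)=\int_x^1\frac{\omega(y,t)}{y}\,dy$ so that $u(x,t)=-xG(x,t)$, and note (from \eqref{alongTrajectories1}) that $\omega,\rho$ stay non-negative and that $\rho(\cdot,t)\equiv c_0$ on $\Phi([a,b],t)=[\Phi(a,t),\Phi(b,t)]$. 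Let $A(t)=\Phi(a,t)$, $L(t)=\log(1/A(t))$ and $J(t)=G(A(t),t)$. Since $u=-xG$ one has $\dot L=J$ at once. Differentiating $J$ along the trajectory of $a$ — using $\partial_xG=-\omega/x$, $\partial_t\omega=-u\partial_x\omega+\rho/x$, and an integration by parts in $y$ in which the $y=1$ boundary term vanishes and the $y=A(t)$ boundary term cancels the transport term — yields
\beq
\dot J(t)=\int_{A(t)}^1\frac{\omega(y,t)^2}{y}\,dy+\int_{A(t)}^1\frac{\rho(y,t)}{y^2}\,dy\ \ge\ \int_{A(t)}^1\frac{\rho(y,t)}{y^2}\,dy .
\eeq
On $[\Phi(a,t),\Phi(b,t)]$ we have $\rho\equiv c_0$, and since $\frac{d}{dt}\log\frac{\Phi(b,t)}{\Phi(a,t)}=\int_{\Phi(a,t)}^{\Phi(b,t)}\frac{\omega(y,t)}{y}\,dy\ge0$, the ratio $\Phi(b,t)/\Phi(a,t)$ stays $\ge b/a$; hence $\int_{A(t)}^1\rho/y^2\,dy\ge c_0\bigl(\Phi(a,t)^{-1}-\Phi(b,t)^{-1}\bigr)\ge c_1 e^{L(t)}$ with $c_1=c_0(1-a/b)>0$. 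Therefore
\beq
\ddot L(t)\ \ge\ c_1 e^{L(t)}\quad\text{on }[0,T^*),\qquad L(0)=\log\tfrac1a,\ \ \dot L(0)=J(0)=\int_a^b\tfrac{\omega_0(y)}{y}\,dy>0 .
\eeq

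The main point — and the reason for this particular choice of tracked quantity — is that the forcing $c_1e^{L}$ is \emph{exponential} in $L$, which is what produces blow-up in finite time, as opposed to the merely (double-)exponential-in-time growth one would extract from $\dot J\ge 0$ alone or from the Cauchy--Schwarz bound $\int\omega^2/y\ge J^2/(\log b+L)$. To finish, since $\frac{d}{dt}(\tfrac12\dot L^2-c_1e^{L})=\dot L(\ddot L-c_1e^L)\ge 0$ and $\dot L>0$ throughout (as $\dot L(0)>0$ and $\ddot L>0$), choosing $\omega_0$ large enough that $\dot L(0)^2\ge 2c_1 e^{L(0)}$ gives $\dot L\ge\sqrt{2c_1}\,e^{L/2}$, hence $\frac{d}{dt}(2e^{-L/2})\le-\sqrt{2c_1}$ and $L(t)\to\infty$ as $t\to T^-$ for some finite $T$. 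Because $A(t)=e^{-L(t)}=\Phi(a,t)$ is continuous and strictly positive on $[0,T^*)$, this forces $T^*\le T<\infty$ and $\liminf_{t\to T^*}\delta(t)=0$; Proposition~\ref{BKMcriterion} then makes $\int_0^{T^*}\|\omega\|_{L^\infty}\,dt$, $\int_0^{T^*}\|\partial_x\rho\|_{L^\infty}\,dt$ and $\int_0^{T^*}\|\partial_x u\|_{L^\infty}\,dt$ all infinite, proving the theorem with $T=T^*$. I expect the only delicate step to be securing the $e^{L}$ (rather than sub-exponential) lower bound for $\int_{A(t)}^1\rho/y^2$, which is exactly why one monitors $\log(\Phi(b,t)/\Phi(a,t))$ and exploits its monotonicity to keep $\Phi(b,t)/\Phi(a,t)$ bounded below.
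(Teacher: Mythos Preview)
Your proof is correct and follows essentially the same route as the paper: both derive the identity $\frac{d^2}{dt^2}\log\bigl(1/\Phi(a,t)\bigr)=\int_{\Phi(a,t)}^1\omega^2/y\,dy+\int_{\Phi(a,t)}^1\rho/y^2\,dy$ via the same integration by parts, and both exploit the monotonicity of $\log\bigl(\Phi(b,t)/\Phi(a,t)\bigr)$ to get the lower bound $\ddot L\ge c\,e^{L}$. The only noteworthy difference is cosmetic: the paper takes $\omega_0\equiv 0$ (so $\dot L(0)=0$) and then analyzes the resulting ODE directly, whereas you take $\omega_0$ large to force $\dot L(0)^2\ge 2c_1 e^{L(0)}$, which lets you run the energy argument $\dot L\ge\sqrt{2c_1}\,e^{L/2}$ and integrate in one line---a slightly cleaner endgame.
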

\begin{proof}
Denote $I=(0,1).$ Consider $\rho_0\in C^{\infty}_0(I)$ such that $0\leq \rho_0 \leq 1$ and $\rho_0\equiv 1$ on $[\frac13,\frac23]$.
For simplicity, choose $\omega_0=0.$ 

The idea is to control how the support of $\rho_0$ moves towards the origin. We assume that the solution stays regular and show that
the characteristics originating at the points with nonzero $\rho_0$ arrive at  the origin in finite time, thus implying that $\delta(t)$ becomes zero
in finite time and then all other blow up characterizations of Proposition~\ref{BKMcriterion} hold.

Note that since $\rho$ and $\omega$ are nonnegative, trajectories always move in the negative $x$ direction. Compute
\begin{align*}
\frac{d^2}{dt^2}\log\left(\frac{1}{\Phi(x,t)}\right)& = -\frac{d\Phi(x,t)}{dt}\cdot \frac{\omega(\Phi(x,t),t)}{\Phi(x,t)}+\int_{\Phi(x,t)}^{1}\frac{-u\partial_x\omega+\frac{\rho}{y}}{y}dy \\
&= \omega(\Phi(x,t),t)\int_{\Phi(x,t)}^{1} \frac{\omega(y,t)}{y}dy -\frac{u\omega}{y}\bigg|_{\Phi(x,t)}^1+ \int_{\Phi(x,t)}^{1} \frac{\omega^2(y,t)}{y}dy+\int_{\Phi(x,t)}^{1} \frac{\rho(y,t)}{y^2}dy\\
&=\int_{\Phi(x,t)}^{1} \frac{\omega^2(y,t)}{y}dy+\int_{\Phi(x,t)}^{1} \frac{\rho(y,t)}{y^2}dy\\
& \geq \int_{\Phi(x,t)}^{1} \frac{\rho(y,t)}{y^2}dy = \int_{\Phi(x,t)}^{1} \frac{\rho_0(\Phi^{-1}(y,t))}{y^2}dy
\end{align*}
Also
$$\int_{\Phi(\frac13,t)}^{1} \frac{\rho_0(\Phi^{-1}(y,t))}{y^2}dy\geq \int_{\Phi(\frac13,t)}^{\Phi(\frac23,t)}\frac{1}{y^2}=\frac{1}{\Phi(\frac13,t)}-\frac{1}{\Phi(\frac23,t)}.$$
 Moreover, (\ref{trajectories}) and (\ref{negativeBS}) together also imply that
$$\frac{d}{dt}\log\left(\frac{1}{\Phi(\frac13,t)}\right)\geq\frac{d}{dt}\log\left(\frac{1}{\Phi(\frac23,t)}\right)$$
leading to $$\log\left(\frac{1}{\Phi(\frac13,t)}\right)-\log\left(\frac{1}{\Phi(\frac23,t)}\right)\geq \log 2,$$
or equivalently $$\frac{1}{\Phi(\frac13,t)}\geq \frac{2}{\Phi(\frac23,t)}$$
Combining all of the above, we have
\begin{equation} \label{differentialIneq1}
\frac{d^2}{dt^2}\log \bigg(\frac{1}{\Phi(\frac13,t)}\bigg)\geq \int_{\Phi(\frac13,t)}^{1} \frac{\rho_0(\Phi^{-1}(y,t))}{y^2}dy\geq \frac{1}{\Phi(\frac13,t)}-\frac{1}{\Phi(\frac23,t)}\geq
\frac{1}{2\Phi(\frac13,t)}.
\end{equation}
Write $y(t)= 1/\Phi(\frac13,t)$. Then based on \eqref{differentialIneq1} we have $y(t) \geq G(t),$ where
\begin{equation}\label{Geq}
G''(t) = \frac12 G(t),\,\,\,G(0)=1, \,\,\,G'(0)=0.
\end{equation}
The choice of the initial condition for the derivative in \eqref{Geq} follows from $y'(0)=0,$ which is a consequence
of our choice $\omega_0(x) \equiv 0.$ Finite time blow up for $G$ is not hard to establish.
Introduce a new variable $v=G'$ and observe that
\begin{equation}\label{comp171}
\frac{1}{6}\frac{d(G^3)}{dG}=\frac{1}{2}G^2=G''=v'=\frac{dv}{dG}G'
=\frac{1}{2}\frac{d(v^2)}{dG}\Longrightarrow v^2=\frac{1}{3}(G^{3}-(\log 3)^3).
\end{equation}
Then
$$v'=G''=\frac{1}{2}G^2=(3v^2+(\log 3)^3)^{2/3}\geq C v^{4/3}, \quad v(0)=G'(0)=0.$$
From this, we can deduce that \[ v(t) \geq \frac{v(0)}{(1-C v(0)^{1/3}t)^3} \]
and thus $v(t)$ and also, according to \eqref{comp171}, $G(t)$ blow up in finite time.
\end{proof}

\section{The Model with non-sign-definite Biot-Savart law}

To study the non-sign-definite model, it will be convenient to introduce a change of variable $z=-\log x$. Denote $\tilde{\rho}(z,t)=\rho(x(z),t)$, $\tilde{\omega}(z,t)=\omega(x(z),t)$ and $\tilde{u}(z,t)=-x(z)^{-1} u(x(z),t)$. In the $z-$coordinate, equations (\ref{simplifiedOmega}), (\ref{simplifiedRho}) and (\ref{nonDefiniteBS}) take form
\begin{align}
\partial_t \tilde{\omega} &+\tilde{u}\cdot \partial_z \tilde{\omega} =\tilde{\rho}\cdot e^z \label{omegaInZ}\\
\partial_t \tilde{\rho} &+\tilde{u}\cdot \partial_z \tilde{\rho} =0 \label{rhoInZ}\\
\tilde{u}(z,t) &=\int_{0}^{z-\gamma_1}\tilde{\omega}(y,t)dy-\int_{z-\gamma_1}^{z+\gamma_2}\tilde{\omega}(y,t)dy \label{BSinZ}
\end{align}
where $\gamma_1=\log \beta_1, \gamma_2=\log \beta_2^{-1}$ and $2e^{-\gamma_1}-e^{\gamma_2}>0$ due to our assumptions on $\beta_{1,2}.$  \\

We will work with the model (\ref{omegaInZ})-(\ref{BSinZ}) for the rest of the paper and abuse notation to suppress tilde and write $(\omega,\rho, u)$  as the solution to (\ref{omegaInZ})-(\ref{BSinZ}) instead of $(\tilde{\omega}, \tilde{\rho},\tilde{u})$. We will also abuse notation to denote $\Phi$ the particle trajectories defined by $\tilde{u}$ via $(\ref{trajectories})$.
Note that in the $z$ formulation, the blow up condition $\delta(t) \rightarrow 0$ becomes $\Phi(Z,t) \rightarrow \infty$ for $Z = {\rm sup} ({\rm supp}(\omega_0, \rho_0)).$

Unlike the method we used previously on the warm-up model, the blow up of the full model becomes more delicate.
It is conceivable that the negative contribution in \eqref{BSinZ} arrests propagation of trajectories to infinity, especially since the negative contribution comes from
the largest $z$ in the support of solution where we can expect $\omega$ to be largest due to the forcing term \eqref{omegaInZ}. We will need to establish a sort of monotonicity
structure that allows to prove blow up. The argument will focus on growth of $\partial_z \Phi(z,t).$


\textbf{The Choice of Initial Data and Parameters}\\
For the rest of the paper, we fix $\beta_1, \beta_2$ (which in succession fixes $\gamma_1, \gamma_2$ respectively) and $\epsilon$ small enough such that
\begin{equation} \label{defEpsilon}
e^{-\gamma_1-\gamma_2-\epsilon}-1>0.
\end{equation}
Next, let the parameters $L_{0,1,2,3,4}$ have the ordering $1<L_0<L_1<L_2<L_3<L_4$. Fix $L_{0},L_1$ such that $L_0\leq L_1/4,$ $\gamma_{1,2}<L_1/4,$ and $\epsilon <L_1/10.$ The choice of $L_{2},L_3$ will be specified later and $L_4$ will be fixed with only one constraint $L_4>L_3$ once $L_3$ is chosen. The initial data $\omega_0, \rho_0$ will be constructed as follows: $\omega_0= 0$ for simplicity; $\rho_0\in C_0^\infty$ is supported on $[1,L_4]$ and such that $0\leq \rho\leq 1$, $\rho_0([L_0,L_3])=1$ and $\rho_0$ is monotone decreasing for $z>L_3$. 

Let us start with a useful a-priori bound on $\Phi$, which is just a $z$-variant of Lemma~\ref{lemma1}.
\begin{lemma}  \label{GammaBound}
Take $\rho_0,\omega_0$ as above. Let $\Gamma(z,t)$ be the solution to
\begin{equation}\label{gammaeq} \partial_t\Gamma(z,t)=e^{\Gamma(z,t)} \cdot\Gamma(z,t)\cdot t,\quad \Gamma(z,0)=z. \end{equation}
Then we have $\Phi(z,t)\leq \Gamma(z,t)$ for all $z$ for as long time as $\Gamma$ is defined.
\end{lemma}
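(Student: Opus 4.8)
The plan is to mirror the proof of Lemma~\ref{lemma1}, transported to the $z$-variable: derive a differential inequality for the running maximum of the trajectory $\Phi(z,\cdot)$ that matches exactly the ODE \eqref{gammaeq}, and then close by an ODE comparison argument. Throughout I assume the solution is regular on the time interval under consideration, so that the characteristics neither cross nor collide; this is the only place regularity is used.

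First I would integrate \eqref{omegaInZ}--\eqref{rhoInZ} along characteristics. Since $\rho$ is transported, $\rho(\Phi(z,t),t)=\rho_0(z)$, and since $\omega_0=0$, the $z$-analogue of \eqref{alongTrajectories1} reads
\[
\omega(\Phi(z,t),t)=\int_0^t \rho_0(z)\,e^{\Phi(z,s)}\,ds,\qquad\text{so that}\qquad 0\le \omega(\Phi(z,t),t)\le \int_0^t e^{\Phi(z,s)}\,ds
\]
using $0\le\rho_0\le 1$. Equivalently, for any $y$ and $\zeta:=\Phi^{-1}(y,t)$ one has $\omega(y,t)=\rho_0(\zeta)\int_0^t e^{\Phi(\zeta,s)}\,ds$; since $z\mapsto\Phi(z,t)$ is increasing (no crossing), $y\le \Phi(z,t)$ forces $\zeta\le z$ and hence $\Phi(\zeta,s)\le\Phi(z,s)$ for all $s\le t$, giving the ``local in $z$'' bound $\omega(y,t)\le \int_0^t e^{\Phi(z,s)}\,ds$ for every $y\le\Phi(z,t)$.

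Next I would bound the velocity along the trajectory. Using $\omega\ge 0$, discard the negative integral in \eqref{BSinZ} and enlarge the first one to get $\partial_t\Phi(z,t)=u(\Phi(z,t),t)\le \int_0^{\Phi(z,t)}\omega(y,t)\,dy$ whenever $\Phi(z,t)\ge 0$ (and if $\Phi(z,t)<0$ there is nothing to prove at that time, since $\Gamma(z,t)\ge\Gamma(z,0)=z$). Combining with the pointwise bound from the previous step,
\[
\partial_t\Phi(z,t)\le \Phi(z,t)\int_0^t e^{\Phi(z,s)}\,ds .
\]
Now set $\Psi(z,t):=\sup_{s\le t}\Phi(z,s)\ge\Phi(z,0)=z$, so $\int_0^t e^{\Phi(z,s)}\,ds\le t\,e^{\Psi(z,t)}$. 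The function $t\mapsto\Psi(z,t)$ is nondecreasing and locally Lipschitz, can increase only at times where $\Phi(z,t)=\Psi(z,t)$, and at such times $\partial_t\Psi(z,t)=\partial_t\Phi(z,t)\le \Psi(z,t)\,t\,e^{\Psi(z,t)}$; hence $\partial_t\Psi(z,t)\le t\,\Psi(z,t)e^{\Psi(z,t)}$ for a.e.\ $t$, with $\Psi(z,0)=z=\Gamma(z,0)$. Since $F(t,\xi)=t\xi e^\xi$ is locally Lipschitz in $\xi$, the standard comparison principle for absolutely continuous subsolutions yields $\Psi(z,t)\le\Gamma(z,t)$ for as long as $\Gamma$ is defined, and therefore $\Phi(z,t)\le\Psi(z,t)\le\Gamma(z,t)$.

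The one step that requires genuine care is the velocity estimate: one must upgrade the crude $\|\omega(\cdot,t)\|_{L^\infty}$ bound to one controlled by $\Phi(z,\cdot)$ itself rather than by $\Phi(Z,\cdot)$ at the right edge of the support, since otherwise the resulting inequality closes only at $z=Z$ and gives $\Phi(z,t)\le\Gamma(Z,t)$, which is too weak. This localization is precisely what the non-crossing of characteristics provides. The competing negative term in the Biot--Savart law \eqref{BSinZ}, which is the main source of difficulty elsewhere in the paper, is harmless for this one-sided a priori bound: it is simply dropped. Everything else — the Lagrangian representation, introducing the running maximum, and the final comparison — is routine.
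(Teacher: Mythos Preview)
Your proof is correct and follows essentially the same route as the paper: drop the negative piece of \eqref{BSinZ} using $\omega\ge 0$, localize the $\omega$ bound via the Lagrangian formula and non-crossing of trajectories to get $\partial_t\Phi(z,t)\le \Phi(z,t)\int_0^t e^{\Phi(z,s)}\,ds$, pass to the running maximum $\Psi$, and apply ODE comparison with \eqref{gammaeq}. Your handling of the comparison step is in fact a bit more careful than the paper's, but the argument is the same.
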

\begin{proof}
Local existence of $\Gamma$ follows by Picard's Theorem. Write $\Psi(z,t):=\sup_{s\in[0,t]}\Phi(z,s)$. Along the particle trajectories $\Phi(z,t)$, we now have
\begin{equation} \label{alongTrajectories2}
\rho(z,t)=\rho_0(\Phi^{-1}(z,t)),\quad \omega(z,t)=\rho_0(\Phi^{-1}(z,t))\int_{0}^{t}e^{\Phi(\Phi^{-1}(z,t)),s)}ds
\end{equation}
So $\omega$ and $\rho$ remain non-negative if they are non-negative initially. Then, given $\omega_0=0$, we have
\begin{align*}
\partial_t\Psi(z,t)&\leq \int_{0}^{\Phi(z,t)}\omega(y,t)dy \leq \int_{0}^{\Phi(z,t)}\int_{0}^{t}e^{\Phi(\Phi^{-1}(y,t)),s)}dsdy
\end{align*}
Here we have used $0\leq\rho_0\leq 1$ and $\omega\geq0$. For $y$ in the integration domain $[0,\Phi(z,t)]$, one must have $\Phi(\Phi^{-1}(y,t),s)\leq \Phi(z,s)\leq \Psi(z,s)$. This is due to non-crossing of trajectories, i.e. $\Phi(z_1,t)\leq\Phi(z_2,t)$ for all $t$ if $z_1\leq z_2$ and similarly for the inverse trajectories $\Phi^{-1}$. Thus, we continue to estimate and arrive at
$$\partial_t\Psi(z,t)\leq \int_{0}^{\Psi(z,t)}\int_{0}^{t}e^{\Psi(z,s)}dsdy\leq \Psi(z,t)\cdot e^{\Psi(z,t)}\cdot t$$
as $\Psi$ is increasing in $t$. A simple comparison $\Phi(z,t)\leq \Psi(z,t)\leq \Gamma(z,t)$ completes the proof.
\end{proof}

A key quantity that we will need to estimate is \[ \frac{\partial_z u(\Phi(z,t),t)}{\partial_z \Phi(z,t)} = 2\omega(\Phi(z,t)-\gamma_1,t)-\omega(\Phi(z,t)+\gamma_2,t). \]
The first step is showing that this quantity becomes positive on most of the support of $\rho_0$ for a very short initial time. This would imply that in this range,
$\partial_z \Phi(z,t)$ is initially growing. One can think of this estimate as a sort of establishment of induction base, to be followed by ``induction step".

\begin{lemma} \label{positivityLemma1}
There exists $t_0=t_0(L_1,L_4)$ such that for all $0<t \leq t_0$ and $L_1\leq z< \Phi^{-1}(\Phi(L_4,t)+\gamma_1,t)$, we have
\begin{equation}\label{keyomexp} 2\omega(\Phi(z,t)-\gamma_1,t)-\omega(\Phi(z,t)+\gamma_2,t)>0. \end{equation}
Note also that the expression in \eqref{keyomexp} is zero for any $z \geq \Phi^{-1}(\Phi(L_4,t)+\gamma_1,t)$ and any time $t$
while solution exists.
\end{lemma}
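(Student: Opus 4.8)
The plan is to pass to the variable $\zeta=\Phi(z,t)$ and to show that for a short time the velocity stays so close to zero that the sign of $2\omega(\zeta-\gamma_1,t)-\omega(\zeta+\gamma_2,t)$ is dictated by the transported density profile together with the exponential weight $e^z$ coming from the forcing. Set $a=\Phi^{-1}(\zeta-\gamma_1,t)$ and $b=\Phi^{-1}(\zeta+\gamma_2,t)$; since $\gamma_1,\gamma_2\ge0$ and trajectories do not cross we have $a\le b$, with equality only in the degenerate case $\gamma_1=\gamma_2=0$. The representation \eqref{alongTrajectories2} gives
\[ \omega(\zeta-\gamma_1,t)=\rho_0(a)\int_0^t e^{\Phi(a,s)}\,ds,\qquad \omega(\zeta+\gamma_2,t)=\rho_0(b)\int_0^t e^{\Phi(b,s)}\,ds. \]
The last assertion of the lemma is immediate from this: if $z\ge\Phi^{-1}(\Phi(L_4,t)+\gamma_1,t)$ then $\zeta\ge\Phi(L_4,t)+\gamma_1$, hence $a\ge L_4$ and a fortiori $b\ge L_4$, so $\rho_0(a)=\rho_0(b)=0$ and the expression vanishes.

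Next I would establish the short-time a priori estimates. Because $\omega_0\equiv0$ we have $u(\cdot,0)\equiv0$, and Lemma~\ref{GammaBound} keeps $\Phi(z,\cdot)$, hence $\supp\omega(\cdot,t)$, inside a fixed bounded set on $[0,t_0]$. From \eqref{alongTrajectories2} this yields $\|\omega(\cdot,t)\|_{L^\infty}\le Ct$, and consequently $\|u(\cdot,t)\|_{L^\infty}\le Ct$ and $\|\partial_z u(\cdot,t)\|_{L^\infty}\le Ct$ (using $\partial_z u=2\omega(\cdot-\gamma_1,t)-\omega(\cdot+\gamma_2,t)$), with $C=C(L_4,\gamma_1,\gamma_2)$, for $t\le t_0$. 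Integrating $\partial_t\Phi=u(\Phi,\cdot)$ and $\partial_t\log\partial_z\Phi=(\partial_z u)(\Phi,\cdot)$ along trajectories gives, uniformly on the relevant $z$-range, $|\Phi(z,t)-z|\le Ct^2$ and $e^{-Ct^2}\le\partial_z\Phi(z,t)\le e^{Ct^2}$. I extract two consequences. First, for $L_1\le z<\Phi^{-1}(\Phi(L_4,t)+\gamma_1,t)$ one gets $a<L_4$, and, using $|\Phi(\cdot,t)-z|\le Ct^2$ together with the parameter choices $L_0\le L_1/4$ and $\gamma_{1,2}<L_1/4$, also $a>L_0$ once $t_0$ is small; hence $\rho_0(a)>0$ (which we may and do arrange by taking $\rho_0>0$ on the interior $(1,L_4)$ of its support), and moreover $\rho_0(a)\ge\rho_0(b)$, since either $a\in[L_0,L_3]$ where $\rho_0\equiv1\ge\rho_0(b)$, or $a>L_3$ so that $b\ge a>L_3$ and $\rho_0$ is non-increasing there. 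Second, $\Phi(b,s)-\Phi(a,s)=\int_a^b\partial_z\Phi(y,s)\,dy\le(b-a)e^{Cs^2}$ for $s\le t$, while $\gamma_1+\gamma_2=\Phi(b,t)-\Phi(a,t)\ge(b-a)e^{-Ct^2}$, so $b-a\le(\gamma_1+\gamma_2)e^{Ct^2}$ and therefore $\Phi(b,s)-\Phi(a,s)\le\gamma_1+\gamma_2+\eta(t_0)$ for all $s\le t\le t_0$, where $\eta(t_0)\to0$ as $t_0\to0$. In particular $\int_0^t e^{\Phi(b,s)}\,ds\le e^{\gamma_1+\gamma_2+\eta(t_0)}\int_0^t e^{\Phi(a,s)}\,ds$.

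Finally I would shrink $t_0$ so that, in addition, $2e^{-\gamma_1-\gamma_2-\eta(t_0)}>1$; this is possible precisely because the standing assumption $2e^{-\gamma_1}-e^{\gamma_2}>0$ says $2e^{-\gamma_1-\gamma_2}>1$. If $\rho_0(b)=0$ then $\omega(\zeta+\gamma_2,t)=0<2\rho_0(a)\int_0^t e^{\Phi(a,s)}\,ds=2\omega(\zeta-\gamma_1,t)$ because $\rho_0(a)>0$. If $\rho_0(b)>0$, then combining $\rho_0(a)\ge\rho_0(b)$ with the comparison of the two time integrals,
\[ 2\omega(\zeta-\gamma_1,t)=2\rho_0(a)\int_0^t e^{\Phi(a,s)}\,ds\ge 2\rho_0(b)e^{-\gamma_1-\gamma_2-\eta(t_0)}\int_0^t e^{\Phi(b,s)}\,ds>\rho_0(b)\int_0^t e^{\Phi(b,s)}\,ds=\omega(\zeta+\gamma_2,t), \]
which is \eqref{keyomexp} with $\zeta=\Phi(z,t)$. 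The main obstacle is the bookkeeping in the middle paragraph: one must control $\Phi$, $\partial_z\Phi$ and $\Phi^{-1}$ simultaneously and uniformly on a $z$-range that itself depends on $t$, and verify that the separation $\Phi(b,s)-\Phi(a,s)$ --- which is pinned to $\gamma_1+\gamma_2$ only at the terminal time $s=t$ --- does not overshoot enough to erode the gain $2e^{-\gamma_1-\gamma_2}>1$. Once these uniform-in-time bounds are in place, the conclusion follows purely from the representation formula and the sign and monotonicity structure of $\rho_0$.
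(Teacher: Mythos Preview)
Your argument is correct and follows essentially the same route as the paper: both use the Lagrangian representation \eqref{alongTrajectories2}, the non-increasing structure of $\rho_0$ on $[L_0,\infty)$ to get $\rho_0(a)\ge\rho_0(b)$, and short-time closeness of trajectories to their initial positions to ensure $\Phi(b,s)-\Phi(a,s)\le\gamma_1+\gamma_2+\epsilon$, after which the standing hypothesis $2e^{-\gamma_1-\gamma_2}>1$ finishes the job. The only difference is cosmetic---you obtain the trajectory control via explicit $O(t^2)$ bounds on $|\Phi(\cdot,t)-\mathrm{id}|$ and $\log\partial_z\Phi$, while the paper simply invokes continuity and compactness to assert the analogous inequalities \eqref{twoInequalities}.
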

\begin{proof}
Note that with the initial data $\rho_0$ described above, the local existence time is controlled by finiteness of the solution to \eqref{gammaeq} corresponding to
the value $z=L_4.$ Then
by local existence and continuity of $\Phi(L_1,t)$, as well as the assumption $L_0 \leq L_1/4,$ there exists a short time $T_0$ such that the solution stays regular and
\begin{equation}\label{Phicontest} \Phi(L_0,t) \leq L_1/2 \,\,\,{\rm and}\,\,\, \Phi(L_1,t)\geq \frac{5L_1}{6},\quad \forall t\leq T_0. \end{equation}
Then we must have
\begin{equation} \label{rightToL1}
\Phi(z,t)-\gamma_1\geq \Phi(L_1,t)-\gamma_1\geq L_1/2,\forall z\geq L_1, \forall t\leq T_0
\end{equation}
because $\gamma_1<L_1/4$; otherwise, trajectories will cross. Denote
$$z_{-}(t)=\Phi^{-1}(\Phi(z,t)-\gamma_1,t),\qquad z_{+}(t)=\Phi^{-1}(\Phi(z,t)+\gamma_2,t).$$
Of course $z_{\pm}$ depend on $z$ but we will suppress this in notation.
Now notice that if $z\in[L_1, \Phi^{-1}(\Phi(L_4,t)+\gamma_1,t)),$ then by \eqref{Phicontest} we have $L_0 \leq z_{-}(t)<L_4$ if $t \leq T_0$ and
therefore $\rho_0(z_{-}(t))>0$. We utilize (\ref{alongTrajectories2}) and monotonicity of $\rho_0$ in the region $z>L_0$ to get
\begin{align}
2\omega(\Phi(z,t)-\gamma_1,t)-\omega(\Phi(z,t)+\gamma_2),t) &=2\rho_0(z_{-}(t))\int_{0}^{t}e^{\Phi(z_{-}(t),s)}ds-\rho_0(z_{+}(t))\int_{0}^{t}e^{\Phi(z_{+}(t),s)}ds \nonumber \\
&\geq \rho_0(z_{-}(t))\int_{0}^{t}\big(2e^{\Phi(z_{-}(t),s)}-e^{\Phi(z_{+}(t),s)}\big)ds,  \label{contra}
\end{align}
for all $z\in[L_1,\Phi^{-1}(\Phi(L_4,t)+\gamma_1,t)).$
Now fix $t_0<T_0$ such that for every $z\in[L_1, \Phi^{-1}(\Phi(L_4,t)+\gamma_1,t))$
\begin{equation}
\Phi(z_{-}(t),s)\geq z-\gamma_1-\epsilon_1, \quad \Phi(z_{+}(t),s)\leq z+\gamma_2+\epsilon_2 \label{twoInequalities}
\end{equation}
for all $0\leq s\leq t\leq t_0$ with $\epsilon_{1,2}>0,$ $\epsilon_1+\epsilon_2\leq \epsilon$ (see definition of $\epsilon$  in (\ref{defEpsilon})). Such $t_0$ can be found due to local existence,
continuity of $\Phi(z,t),$ and finiteness of the domain. Therefore, plugging $(\ref{twoInequalities})$ into $(\ref{contra})$ yields that for $t\leq t_0$ and
$z\in[L_1, \Phi^{-1}(\Phi(L_4,t)+\gamma_1,t))$ we have
\begin{align*}
2\omega(\Phi(z,t)&-\gamma_1,t)-\omega(\Phi(z,t)+\gamma_2),t)\geq \rho_0(z_{-}(t))\int_{0}^{t}\big(2e^{z-\gamma_1-\epsilon_1}-e^{z+\gamma_1+\epsilon_1}\big)ds\\
 &\geq \rho_0(z_{-}(t))e^{z+\gamma_2+\epsilon_2}(2e^{-\gamma_1-\gamma_2-\epsilon}-1)t>0.
\end{align*}
\end{proof}

Let us now outline the plan of the proof of our main result Theorem~\ref{NonDefiniteBlowUp}.
As we already mentioned, Lemma~\ref{positivityLemma1} can be viewed as an ``induction base" - we established positivity of a key quantity for a very short time depending
on ``fast" parameter $L_4.$ In the next proposition, we show that this positivity is preserved, provided that the solution stays regular, for a period of time
that depends only on the ``slow" parameters $L_1,\gamma_{1,2},\epsilon.$ We will then use this positivity to show singularity formation in an arbitrary short time provided that we choose the fast parameter $L_3$ large enough.

Fix $\tau_0>0$ to be such that
\begin{equation}\label{tau0con} \tau_0e^{\Gamma(3L_1,\tau_0)}= \frac{\epsilon}{\tau_0(\gamma_1+\gamma_2+\epsilon)}. \end{equation}
The existence of $\tau_0$ follows from the local bounds on $\Gamma$ evident from \eqref{gammaeq}.
We will also assume that
\begin{eqnarray}\label{tau0con1}
\Phi(L_0,t) \leq \Gamma(L_0,t) < L_0+ \frac{L_1}{6} \leq \frac{L_1}{2} \\
\label{tau0con2}
\Phi(L_1,t) \leq \Gamma(L_1,t) < \frac{3L_1}{2}
\end{eqnarray}
for all $0 \leq t \leq \tau_0$ if necessary by decreasing $\tau_0.$

\begin{proposition} \label{positivityProp1}
For all $t\in [0,\tau_0]$ and while the regular solution exists, and for all $z\in [L_1,\Phi^{-1}(\Phi(L_4,t)+\gamma_1,t))$, we have
\begin{equation}\label{keyomexp1} 2\omega(\Phi(z,t)-\gamma_1,t)-\omega(\Phi(z,t)+\gamma_2,t)>0. \end{equation}
\end{proposition}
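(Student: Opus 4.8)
The plan is to argue by a continuity (bootstrap) argument in $t$. Fix the regular solution on its maximal interval of existence and let $T_1$ be the supremum of the times $t\le\tau_0$ for which \eqref{keyomexp1} holds at every $s\in[0,t]$ and for every $z$ in the indicated range; by Lemma~\ref{positivityLemma1}, $T_1\ge t_0>0$, so it suffices to exclude $T_1<\tau_0$ (while the solution stays regular). I keep the notation $z_-(t)=\Phi^{-1}(\Phi(z,t)-\gamma_1,t)$, $z_+(t)=\Phi^{-1}(\Phi(z,t)+\gamma_2,t)$ from the proof of Lemma~\ref{positivityLemma1}, and abbreviate by $Q(w,s)=2\omega(\Phi(w,s)-\gamma_1,s)-\omega(\Phi(w,s)+\gamma_2,s)$ the left side of \eqref{keyomexp1}. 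From \eqref{trajectories} one has $\partial_t\partial_w\Phi(w,t)=Q(w,t)\,\partial_w\Phi(w,t)$, hence $\partial_w\Phi(w,t)=\exp\big(\int_0^t Q(w,s)\,ds\big)$, and therefore $Q\ge0$, $\partial_w\Phi\ge1$ for $w\ge L_1$ and $s<T_1$, since $Q$ is positive on the range by the bootstrap hypothesis and vanishes to its right by Lemma~\ref{positivityLemma1}. Finally, exactly as in the proof of Lemma~\ref{positivityLemma1}, once $z_-(t)\ge L_0$ the monotonicity of $\rho_0$ on $[L_0,\infty)$ and \eqref{alongTrajectories2} give
\[
Q(z,t)\ \ge\ \rho_0(z_-(t))\int_0^t\big(2e^{\Phi(z_-(t),s)}-e^{\Phi(z_+(t),s)}\big)\,ds ,
\]
with $\rho_0(z_-(t))>0$ on the range; so \eqref{keyomexp1} follows once one proves the \emph{gap bound} $\Phi(z_+(t),s)-\Phi(z_-(t),s)<\log 2$ for all $s\le t$.

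Next I would record the a priori trajectory bounds. Lemma~\ref{GammaBound} and \eqref{tau0con1}--\eqref{tau0con2} already give $\Phi(L_0,t)<L_1/2$ and $\Phi(L_1,t)<3L_1/2$ for $t\le\tau_0$. A short continuity argument upgrades these to $\Phi(3L_1,t)>2L_1$ and $\Phi(L_1,t)>L_1-\epsilon$ for $t\le\tau_0$: while these hold, Lemma~\ref{GammaBound} and \eqref{alongTrajectories2} localize $\omega$ near the bottom of the support at a slow spatial scale comparable to $L_1$ (this is where the quantity $\Gamma(3L_1,\cdot)$ from \eqref{tau0con} enters), giving $\|\omega(\cdot,s)\|_{L^\infty((0,2L_1))}\le s\,e^{\Gamma(3L_1,s)}$, so the downward drift $\tfrac{d}{ds}\Phi(z,s)\ge-(\gamma_1+\gamma_2)\|\omega(\cdot,s)\|_{L^\infty((0,2L_1))}$ (valid for the relevant trajectories) integrates, using \eqref{tau0con} and $\epsilon<L_1/10$, to a drop of at most $\epsilon$. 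Consequently $\Phi(z,t)-\gamma_1\ge\Phi(L_1,t)-\gamma_1>L_1/2>\Phi(L_0,t)$, so $z_-(t)\ge L_0$ (and $\rho_0(z_-(t))>0$, as also $z_-(t)<L_4$ on the range) for every $z$ in the range and all $t\le\tau_0$, and $\|\omega(\cdot,t)\|_{L^\infty((0,2L_1))}\le t\,e^{\Gamma(3L_1,t)}$ throughout.

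The crux is the gap bound. Differentiating in $s$ and using $\tfrac{\partial}{\partial w}\big(u(\Phi(w,s),s)\big)=Q(w,s)\,\partial_w\Phi(w,s)$,
\[
\frac{d}{ds}\big(\Phi(z_+(t),s)-\Phi(z_-(t),s)\big)\ =\ \int_{z_-(t)}^{z_+(t)}Q(w,s)\,\partial_w\Phi(w,s)\,dw .
\]
By the bootstrap hypothesis the integrand is $\ge0$ for $w\ge L_1$, so the only negative contribution comes from $w\in[z_-(t),L_1)$ --- which is empty unless $z$ lies within $O(\gamma_1)$ of $L_1$ --- where $Q(w,s)\ge-\omega(\Phi(w,s)+\gamma_2,s)$. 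Undoing the change of variables $y=\Phi(w,s)$ on that piece bounds the negative part by $\int_{\Phi(z_-(t),s)+\gamma_2}^{\Phi(L_1,s)+\gamma_2}\omega(y,s)\,dy$, an integral over a subinterval of $(0,2L_1)$ of length $\ell(s)=\Phi(L_1,s)-\Phi(z_-(t),s)$; since $\ell(t)\le\gamma_1$ and $|\ell'(s)|\le 3\|\omega(\cdot,s)\|_{L^\infty((0,2L_1))}\,\ell(s)$, Gr\"onwall keeps $\ell(s)\le C\gamma_1$ on $[0,\tau_0]$. Using $\|\omega(\cdot,s)\|_{L^\infty((0,2L_1))}\le\tau_0\,e^{\Gamma(3L_1,\tau_0)}$, that the gap is exactly $\gamma_1+\gamma_2$ at $s=t$, and integrating in $s$, \eqref{tau0con} gives
\[
\Phi(z_+(t),s)-\Phi(z_-(t),s)\ \le\ (\gamma_1+\gamma_2)+C\gamma_1\,\tau_0^2 e^{\Gamma(3L_1,\tau_0)}\ =\ (\gamma_1+\gamma_2)+\frac{C\gamma_1\,\epsilon}{\gamma_1+\gamma_2+\epsilon}\ \le\ \gamma_1+\gamma_2+C\epsilon ,
\]
with $C$ an absolute constant; since $\gamma_1+\gamma_2<\log 2$, choosing $\epsilon$ in \eqref{defEpsilon} small enough that $\gamma_1+\gamma_2+C\epsilon<\log 2$ completes the gap bound, hence \eqref{keyomexp1}, for $t<T_1$.

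The estimates above are continuous in $(z,t)$ and pass to the limit $t\to T_1$, so \eqref{keyomexp1} holds at $T_1$; since the a priori bounds of the second paragraph persist beyond $T_1$, the positivity extends to a neighbourhood of $T_1$, contradicting the maximality of $T_1$ unless $T_1=\tau_0$. The step I expect to be the main obstacle is the gap bound: the negative term in the Biot--Savart law \eqref{BSinZ} is concentrated precisely where $\omega$ is largest (the top of the support, driven by the forcing in \eqref{omegaInZ}), so a priori it could pull $z_-$ and $z_+$ together and flip the sign of $Q$. The argument works only because, granted the bootstrap positivity for $w\ge L_1$, the net effect of that term on the gap is felt solely through $w<L_1$, where the trajectories have barely moved and $\omega$ is still governed by the slow scale $\sim L_1$ rather than the fast scale $L_4$ --- and \eqref{tau0con} is calibrated so that this residual effect stays below $\epsilon$.
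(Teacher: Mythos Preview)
Your overall strategy is the same as the paper's: run a continuity argument on $Q(z,t)>0$, use the consequence $\partial_z\Phi\ge 1$ on $[L_1,\infty)$, and reduce \eqref{keyomexp1} to a ``gap bound'' $\Phi(z_+(t),s)-\Phi(z_-(t),s)\le\gamma_1+\gamma_2+O(\epsilon)$, with the only obstruction coming from Lagrangian labels $w<L_1$ where $\omega$ is controlled at the slow scale via \eqref{tau0con}. The paper packages the gap bound as a contradiction (if the gap ever exceeds $\gamma_1+\gamma_2+\epsilon$, the mean value theorem produces a point with $Q<-\epsilon/(\tau_0(\gamma_1+\gamma_2+\epsilon))$, which violates \eqref{tau0con}), whereas you integrate the derivative of the gap directly; these are two sides of the same coin.

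There is, however, a genuine (though easily repaired) circularity in your second paragraph. Your ``short continuity argument'' for $\Phi(3L_1,t)>2L_1$ bounds the downward drift of $\Phi(3L_1,\cdot)$ by $(\gamma_1+\gamma_2)\|\omega(\cdot,s)\|_{L^\infty((0,2L_1))}$, but the drift of $\Phi(3L_1,\cdot)$ involves $\omega$ at $\Phi(3L_1,s)+\gamma_2>2L_1$, which is \emph{outside} $(0,2L_1)$; so that bootstrap does not close. The paper circumvents this (and you can too) by never invoking a blanket $L^\infty((0,2L_1))$ bound: instead, use your outer bootstrap $\partial_z\Phi\ge1$ on $[L_1,\infty)$ to conclude that every point $y$ you actually need --- namely $y\in[\Phi(z_-(t),s)-\gamma_1,\ \Phi(L_1,s)+\gamma_2]$ --- satisfies $\Phi^{-1}(y,s)\le \Phi^{-1}(\Phi(L_1,s)+\gamma_2,s)\le L_1+\gamma_2$, hence $\omega(y,s)\le s\,e^{\Gamma(L_1+\gamma_2,s)}\le s\,e^{\Gamma(3L_1,s)}$. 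This simultaneously feeds your drift bound for $\Phi(L_1,\cdot)$, your Gr\"onwall for $\ell(s)$, and your estimate of the negative part of the gap derivative, and no separate statement about $\Phi(3L_1,\cdot)$ is needed. With this fix your argument goes through; note also that your final inequality $\gamma_1+\gamma_2+C\epsilon<\log 2$ requires a possibly smaller $\epsilon$ than \eqref{defEpsilon} alone, which is harmless since $C$ is absolute and $\epsilon$ is chosen first.
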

\begin{proof}
Suppose not.
Observe that for every $t$ while the solution exists, there is an $\eta(t)>0$ such that for $z\in [\Phi^{-1}(\Phi(L_4,t)+\gamma_1,t)-\eta(t),\Phi^{-1}(\Phi(L_4,t)+\gamma_1,t))$
we must have strict inequality in \eqref{keyomexp1}. This $\eta(t)$ can be determined by condition that
\[ \Phi(\Phi^{-1}(\Phi(L_4,t)+\gamma_1,t)-\eta(t))+\gamma_2 \geq L_4, \]
so that the second term in \eqref{keyomexp1} vanishes. That $\eta(t)>0$ follows from non-intersection of the trajectories while solution stays regular.

Now due to Lemma~\ref{positivityLemma1}, continuity of solution, and compactness of domain
\[ \{ t_0 \leq t \leq \tau_0, \,\,\, L_1 \leq z \leq \Phi^{-1}(\Phi(L_4,t)+\gamma_1,t)-\eta(t) \}, \]
the only way \eqref{keyomexp1} can be violated is if there exists $t_0 < \tau_1 \leq \tau_0$ such that for $t < \tau_1$ \eqref{keyomexp1} holds but
\begin{equation}\label{breaktau1}
2\omega(\Phi(z_1,\tau_1)-\gamma_1,\tau_1)-\omega(\Phi(z_1,\tau_1)+\gamma_2,\tau_1) =0.
\end{equation}
for some $z_1\in [L_1,\Phi^{-1}(\Phi(L_4,\tau_1)+\gamma_1,\tau_1)-\eta(\tau_1)].$

\begin{lemma}\label{decaycon}
We have
\begin{equation}\label{decayest} \Phi(L_1,t)-\gamma_1\geq \Phi(L_0,t), \,\,\,\Phi(2L_1,t) \geq \Phi(L_1,t)+\gamma_2\end{equation} for $0\leq t \leq \tau_1$. \end{lemma}
\begin{proof}
Let us focus on the proof of the first inequality in \eqref{decayest}, as the proof of the second one is similar modulo using \eqref{tau0con2}
instead of \eqref{tau0con1}.
Integrating (\ref{trajectories}) in $t$ and differentiating in $z$ gives
\begin{equation}\label{dzPhi}
\partial_z \Phi(z,t)=1+\int_{0}^{t} \bigg(2\omega(\Phi(z,s)-\gamma_1,s)-\omega(\Phi(z,s)+\gamma_2,s)\bigg)ds.
\end{equation}
Definition of $\tau_1$ implies that $\partial_z \Phi(z,t)\geq 1$ for all $z\geq L_1,$ $t \leq \tau_1$. Denote $Z_1(t):=\Phi^{-1}(\Phi(L_1,t)+\gamma_2,t)$, then
$$\gamma_2=\Phi(Z_1(t),t)-\Phi(L_1,t)\geq Z_1(t)-L_1$$
which yields
\begin{equation} \label{boundForZ1}
Z_1(t) \leq L_1+\gamma_2.
\end{equation}

Using \eqref{BSinZ}, \eqref{alongTrajectories2}, $\rho_0\leq 1,$ the definition of $\tau_0$ \eqref{tau0con}, and $\gamma_2 \leq L_1/4$ we obtain that
\begin{align*}
\frac{d}{dt}\Phi(L_1,t)&\geq -\int_{\Phi(L_1,t)-\gamma_1}^{\Phi(L_1,t)+\gamma_2}\omega(y,t)dy\geq-\int_{\Phi(L_1,t)-\gamma_1}^{\Phi(L_1,t)+\gamma_2}\bigg( \int_{0}^{t} e^{\Phi(\Phi^{-1}(y,t),s)}ds\bigg)dy \\
                       &\geq -\int_{\Phi(L_1,t)-\gamma_1}^{\Phi(L_1,t)+\gamma_2}\bigg( \int_{0}^{t} e^{\Phi(Z_1(t),s)}ds\bigg)dy\geq -\int_{\Phi(L_1,t)-\gamma_1}^{\Phi(L_1,t)+\gamma_2}\bigg( \int_{0}^{t} e^{\Phi(L_1+\gamma_2,s)}ds\bigg)dy\\
                       &\geq -(\gamma_2+\gamma_1)te^{\Gamma(L_1+\gamma_2,t)} \geq -(\gamma_2+\gamma_1) \frac{\epsilon}{\tau_0(\gamma_1+\gamma_2+\epsilon)} \geq -\frac{\epsilon}{\tau_0},\quad \forall t\leq \tau_1
\end{align*}

Therefore, using our assumptions on $\epsilon,$ $\gamma_2,$ $L_0$ and \eqref{tau0con1}, we have for all $t \leq \tau_1$
$$\Phi(L_1,t)\geq  L_1-\epsilon \geq \gamma_1+\Phi(L_0,t)$$
which finishes the proof of the lemma.
\end{proof}

We write $z_{-}^1(\tau_1)=\Phi^{-1}(\Phi(z_1,\tau_1)-\gamma_1,\tau_1),$ $z_{+}^1(\tau_1)=\Phi^{-1}(\Phi(z_1,\tau_1)+\gamma_2,\tau_1)$, then naturally $z_{-}^1(\tau_1)<z_{+}^1(\tau_1)$ by non-intersection of trajectories. Let $0<s<\tau_1$ be such that
\begin{equation} \label{chooseS}
\Phi(z_{-}^1(\tau_1),s)\leq \Phi(z_{+}^1(\tau_1),s)-\gamma_1-\gamma_2-\epsilon
\end{equation}
Note that such $s$ must exist. Otherwise, Lemma~\ref{decaycon} guarantees that $\rho(z^1_{-}(t)) > 0$, and the breakthrough scenario at $\tau_1$ cannot happen
 due to (\ref{contra}) and \eqref{defEpsilon}. Let us focus on $s<\tau_1$ that is the maximal time for which equality in \eqref{chooseS} holds.
Now
\begin{align*}
\gamma_1+\gamma_2 &=(\Phi(z_1,\tau_1)+\gamma_2)-(\Phi(z_1,\tau_1)-\gamma_1)\\
                  &=\Phi[\Phi^{-1}(\Phi(z_1,\tau_1)+\gamma_2,\tau_1),\tau_1]-\Phi[\Phi^{-1}(\Phi(z_1,\tau_1)-\gamma_1,\tau_1),\tau_1]\\
                  &=\bigg(\Phi(z_{+}^1(\tau_1),s)+\int_{s}^{\tau_1} u(\Phi(z_{+}^1(\tau_1)),r)dr\bigg) - \bigg(\Phi(z_{-}^1(\tau_1),s) + \int_{s}^{\tau_1} u(\Phi(z_{-}^1(\tau_1)),r)dr\bigg)\\
                  &= \gamma_1+\gamma_2+\epsilon + \int_{s}^{\tau_1} dr\int _{\Phi(z_{+}^1(\tau_1),r)}^{\Phi(z_{-}^1(\tau_1),r)} \frac{\partial u}{\partial y}(y,r)dy\\
                  &= \gamma_1+\gamma_2+\epsilon + \int_{s}^{\tau_1} dr\int _{\Phi(z_{-}^1(\tau_1),r)}^{\Phi(z_{+}^1(\tau_1),r)} 2\omega(y-\gamma_1,r)-\omega(y+\gamma_2,r)dy.
\end{align*}
The choice of $s$ and \eqref{chooseS} immediately give $\Phi(z_{+}^1(\tau_1),r)-\Phi(z_{-}^1(\tau_1),r) < \gamma_1+\gamma_2+\epsilon$ for all $r\in(s,\tau_1)$, which implies that there must exist some $r\in(s,\tau_1)$ and some $y_0\in [\Phi(z_{-}(\tau_1),r),\Phi(z_{+}(\tau_1),r)]$ such that
\begin{equation} \label{contra2}
2\omega(y_0-\gamma_1,r)-\omega(y_0+\gamma_2,r)< -\frac{\epsilon}{(\tau_1-s)(\gamma_1+\gamma_2+\epsilon)}< -\frac{\epsilon}{\tau_0(\gamma_1+\gamma_2+\epsilon)}
\end{equation}
From the definition of $\tau_1$, we can infer that the only possibility is that $y_0=\Phi(z_0,r)$ for some $z_0\in[0,L_1)$.
Once (\ref{contra2}) is established, what is left to obtain contradiction is just to estimate $\omega(y_0+\gamma_2,r)$.
Note that by the second inequality in \eqref{decayest}, we have
$\Phi(z_0,r)+\gamma_2\leq \Phi(2L_1,r).$
Using this and (\ref{alongTrajectories2}) we get
\begin{align} \label{contra3}
\omega(y_0+\gamma_2,r)= \rho_0(\Phi^{-1}(y_0+\gamma_2,r))\int_{0}^{r} e^{\Phi(\Phi^{-1}(y_0+\gamma_2,r),r')}dr'\leq re^{\Gamma(2L_1,r)}< \tau_0e^{\Gamma(2L_1,\tau_0)}.
\end{align}
Non-negativity of $\omega$ together with (\ref{contra2}) and (\ref{contra3}) jointly contradict the choice of $\tau_0$ \eqref{tau0con} and the proof is complete.
\end{proof}

Let us reiterate that as opposed to Lemma \ref{positivityLemma1}, Proposition \ref{positivityProp1} holds for $\tau_0$ independent of $L_3$.
We are now free to choose $L_3$ large enough and assume \eqref{keyomexp1} for all times while solution exists.
The next proposition strengthens the bound in \eqref{keyomexp1} in a narrower range of $z.$

\begin{proposition} \label{positivityProp2}
Suppose $L_3 > L_2\geq L_1+\gamma_1$. Then for all $z\in[L_2,L_3]$ and $t\in[0,\tau_0]$, and while the regular solution exists, we have
\begin{equation}
2\omega(\Phi(z,t)-\gamma_1,t)-\omega(\Phi(z,t)+\gamma_2,t)\geq (2e^{-\gamma_1-\gamma_2}-1)\int_{0}^{t}e^{\Phi(z,s)}ds.
\end{equation}
\begin{proof}
First notice that for such choice of $L_2$, due to \eqref{dzPhi} and Proposition~\ref{positivityProp1}
we have $\Phi(L_2,t)\geq \Phi(L_1,t)+\gamma_1$ for all $t\in[0,\tau_0]$. This in turn ensures that $z_{-}(t)=\Phi^{-1}(\Phi(z,t)-\gamma_1,t)\geq L_1$ for all $z\geq L_2$. Then, based on Proposition \ref{positivityProp1} we obtain that
\begin{align}
\gamma_1+\gamma_2 &=(\Phi(z,t)+\gamma_2)-(\Phi(z,t)-\gamma_1) \nonumber\\
                  &=\bigg(\Phi(z_{+}(t),s)+\int_{s}^{t} u(\Phi(z_{+}(t),r),r)dr\bigg) - \bigg(\Phi(z_{-}(t),s) + \int_{s}^{t} u(\Phi(z_{-}(t),r),r)dr\bigg) \nonumber\\
                  &= \Phi(z_{+}(t),s)-\Phi(z_{-}(t),s)+ \int_{s}^{\tau_1} dr\int _{\Phi(z_{-}'(\tau_1),r)}^{\Phi(z_{+}'(\tau_1),r)} \bigg(2\omega(y-\gamma_1,r)-\omega(y+\gamma_2,r)\bigg)dy \nonumber \\
                  &\geq \Phi(z_{+}(t),s)-\Phi(z_{-}(t),s). \label{LeftRightToZ}
\end{align}
However, observe also that when $z\in[L_2,L_3]$, we always have $\rho_0(z_{-}(t))=1$. So we can recall (\ref{contra}) and combine with $(\ref{LeftRightToZ})$ to deduce that
\begin{eqnarray*}
2\omega(\Phi(z,t)-\gamma_1,t)-\omega(\Phi(z,t)+\gamma_2,t) \geq \int_{0}^{t}\bigg(2e^{\Phi(z_{-}(t),s)}-e^{\Phi(z_{+}(t),s)}\bigg)ds\\ \geq \int_{0}^{t}e^{\Phi(z_{+}(t),s)}(2e^{-\gamma_1-\gamma_2}-1)ds
      \geq (2e^{-\gamma_1-\gamma_2}-1)\int_{0}^{t}e^{\Phi(z,s)}ds.
\end{eqnarray*}
\end{proof}
\end{proposition}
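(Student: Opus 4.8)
The plan is to bootstrap entirely from Proposition~\ref{positivityProp1}, extracting from it one structural fact at the outset: since $2\omega(\Phi(\zeta,s)-\gamma_1,s)-\omega(\Phi(\zeta,s)+\gamma_2,s)\ge 0$ for every $\zeta\ge L_1$ and every $s\le\tau_0$ (strictly positive on $[L_1,\Phi^{-1}(\Phi(L_4,s)+\gamma_1,s))$ by Proposition~\ref{positivityProp1}, and identically zero beyond it because $\rho_0$ is supported in $[1,L_4]$, so both $\omega$-factors vanish there), formula \eqref{dzPhi} gives $\partial_z\Phi(\zeta,t)\ge 1$ for all $\zeta\ge L_1$, $t\le\tau_0$. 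Integrating in $\zeta$ over $[L_1,L_2]$ and using $L_2\ge L_1+\gamma_1$ yields $\Phi(L_2,t)\ge\Phi(L_1,t)+\gamma_1$, so for any $z\in[L_2,L_3]$ the shifted labels $z_-(t):=\Phi^{-1}(\Phi(z,t)-\gamma_1,t)$ and $z_+(t):=\Phi^{-1}(\Phi(z,t)+\gamma_2,t)$ satisfy $L_1\le z_-(t)<z\le L_3$; since $L_1>L_0$ and $\rho_0\equiv 1$ on $[L_0,L_3]$, this gives $\rho_0(z_-(t))=1$, whence also $\rho_0(z_+(t))\le 1=\rho_0(z_-(t))$.

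The heart of the argument is then a contraction estimate for the flow: for every $s\le t$ one has $\Phi(z_+(t),s)-\Phi(z_-(t),s)\le\gamma_1+\gamma_2$. To see this, write $\gamma_1+\gamma_2=(\Phi(z,t)+\gamma_2)-(\Phi(z,t)-\gamma_1)=\Phi(z_+(t),t)-\Phi(z_-(t),t)$, transport both particles back from time $t$ to time $s$ along \eqref{trajectories}, and rewrite the difference of the two transport integrals as $\int_s^t dr\int_{\Phi(z_-(t),r)}^{\Phi(z_+(t),r)}\partial_y u(y,r)\,dy$. Since $\partial_y u(y,r)=2\omega(y-\gamma_1,r)-\omega(y+\gamma_2,r)$ and every $y$ in this range satisfies $y\ge\Phi(z_-(t),r)\ge\Phi(L_1,r)$ (non-crossing of trajectories plus $z_-(t)\ge L_1$) while, where $y\ge\Phi(L_4,r)+\gamma_1$, the integrand merely vanishes, Proposition~\ref{positivityProp1} forces this double integral to be $\ge 0$, and the identity rearranges to the claimed bound.

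From here the conclusion is a routine chain. Starting from the lower bound \eqref{contra} of Lemma~\ref{positivityLemma1}, applicable because $\rho_0(z_+(t))\le\rho_0(z_-(t))$, and substituting $\rho_0(z_-(t))=1$, one gets $2\omega(\Phi(z,t)-\gamma_1,t)-\omega(\Phi(z,t)+\gamma_2,t)\ge\int_0^t\bigl(2e^{\Phi(z_-(t),s)}-e^{\Phi(z_+(t),s)}\bigr)ds$. The contraction estimate gives $\Phi(z_-(t),s)\ge\Phi(z_+(t),s)-\gamma_1-\gamma_2$, hence $2e^{\Phi(z_-(t),s)}-e^{\Phi(z_+(t),s)}\ge e^{\Phi(z_+(t),s)}(2e^{-\gamma_1-\gamma_2}-1)$; the factor $2e^{-\gamma_1-\gamma_2}-1$ is positive by the standing parameter assumption $2e^{-\gamma_1}>e^{\gamma_2}$, and $\Phi(z_+(t),s)\ge\Phi(z,s)$ since $z_+(t)>z$, so the right-hand side is at least $(2e^{-\gamma_1-\gamma_2}-1)\int_0^t e^{\Phi(z,s)}ds$, which is exactly the assertion.

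I expect the main obstacle to be the contraction estimate in the second step: one must check carefully that for \emph{all} intermediate times $r\in[s,t]$ and \emph{all} intermediate positions $y\in[\Phi(z_-(t),r),\Phi(z_+(t),r)]$ the quantity $2\omega(y-\gamma_1,r)-\omega(y+\gamma_2,r)$ is nonnegative. This is precisely where the hypothesis $L_2\ge L_1+\gamma_1$ and the monotonicity $\partial_z\Phi\ge 1$ (which pins down the order of the labels $L_1,z_-(t),z,z_+(t)$ under the flow) are genuinely used, and it is easy to be sloppy about the endpoints $\Phi(L_1,r)$ and $\Phi(L_4,r)+\gamma_1$. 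Everything else — the identity $\rho_0(z_-(t))=1$, the elementary exponential inequality, and the final replacement of $\Phi(z_+(t),s)$ by $\Phi(z,s)$ — is bookkeeping.
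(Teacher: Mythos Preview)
Your proof is correct and follows essentially the same route as the paper's: you first use Proposition~\ref{positivityProp1} and \eqref{dzPhi} to get $\partial_z\Phi\ge 1$ on $[L_1,\infty)\times[0,\tau_0]$, hence $z_-(t)\ge L_1$ and $\rho_0(z_-(t))=1$; then you prove the contraction estimate $\Phi(z_+(t),s)-\Phi(z_-(t),s)\le\gamma_1+\gamma_2$ by the same transport identity and sign argument; and finally you combine \eqref{contra} with the contraction estimate and $\Phi(z_+(t),s)\ge\Phi(z,s)$ exactly as the paper does. If anything, your write-up is slightly more careful at the endpoints (explicitly noting that the integrand vanishes for $y\ge\Phi(L_4,r)+\gamma_1$, and checking $z_-(t)\le L_3$ so that $\rho_0(z_-(t))=1$), but the structure is identical.
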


Let us prove one last lemma before we move on to show finite time singularity formation.
\begin{lemma} \label{compareToF}
Define $f(z,t)$ for $z\in[L_2,L_3]$ by
\begin{align} \label{equationForF}
\partial_t f(z,t)= c\int_{0}^{t} e^{\int_{L_2}^{z} f(y,t)dy}dt,\quad f(z,0)\equiv 1/2,
\end{align}
where $c$ is a fixed positive constant.
Then\\
(a) For each $L_2 < L_3 <\infty$, the equation is locally well-posed;\\
(b) Given any $\tau_0>0,$ $L_3<\infty$ can be chosen so that $f(L_3,t)$ becomes infinite before $\tau_0$.
\end{lemma}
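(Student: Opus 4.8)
The plan is to handle (a) by a standard contraction‑mapping argument and (b) by a bootstrap that exploits the fact that, for $z$ near $L_3$, the "initial mass'' $\int_{L_2}^z f(y,0)\,dy$ is already large when $L_3-L_2$ is large.

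For (a) I would rewrite the equation as the fixed‑point relation $f=\Psi[f]$ with $\Psi[f](z,t)=\tfrac12+c\int_0^t\!\int_0^\sigma e^{\int_{L_2}^z f(y,r)\,dy}\,dr\,d\sigma$ and run Banach's theorem in $C([L_2,L_3]\times[0,T])$ on a small ball around the constant $\tfrac12$: the map $g\mapsto e^{\int_{L_2}^z g}$ is Lipschitz in the sup norm with constant depending only on the radius and on $L_3-L_2$, and the two time integrations supply a factor $T^2$, so $\Psi$ is a contraction for $T$ small. This gives a unique local solution extending to a maximal interval $[0,T_{\max})$ with the usual blow‑up alternative. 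Since $e^{(\cdot)}>0$ we get $\partial_t f>0$ for $t>0$, hence $f\ge\tfrac12$; differentiating the fixed‑point relation in $z$ gives $\partial_z f(z,t)=c\int_0^t\!\int_0^\sigma f(z,r)e^{\int_{L_2}^z f(y,r)\,dy}\,dr\,d\sigma\ge 0$ (and a further differentiation shows $f(\cdot,t)$ is convex on $[L_2,L_3]$), so $\|f(\cdot,t)\|_{C^0}=f(L_3,t)$ and the blow‑up alternative reads: $T_{\max}<\infty$ forces $f(L_3,t)\to\infty$ as $t\to T_{\max}^-$.

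For (b), by this alternative it suffices to show that once $L_3-L_2$ is large the solution cannot survive on all of $[0,\tau_0]$. Writing $F(z,t)=\int_{L_2}^z f(y,t)\,dy$ and differentiating $\partial_t f(z,t)=c\int_0^t e^{F(z,s)}\,ds$ once more in $t$ yields the clean identity $\partial_t^2 f(z,t)=c\,e^{F(z,t)}$, hence, integrating in $z$,
\[
\partial_t^2 F(z,t)=c\int_{L_2}^z e^{F(y,t)}\,dy,\qquad F(z,0)=\tfrac{z-L_2}{2},\quad \partial_t F(z,0)=0 .
\]
Now I would iterate: from $f\ge\tfrac12$ we have $F(y,t)\ge\tfrac{y-L_2}{2}$, and inserting this and integrating twice in $t$ gives $F(z,t)\ge\tfrac{z-L_2}{2}+ct^2\big(e^{(z-L_2)/2}-1\big)$; feeding this back — using that $e^{F(y,t)}$ is increasing in $y$, so that $\int_{L_2}^{L_3}e^{F(y,t)}\,dy$ is controlled from below by its values near $y=L_3$ with a Laplace‑type gain — produces a lower bound for $F(L_3,\cdot)$ roughly of the size $e^{c\,e^{(L_3-L_2)/2}t^2}$ up to polynomial factors, and so on. Because $F(L_3,0)=\tfrac{L_3-L_2}{2}$ is already large, each round exponentiates the previous bound and the feedback ignites; choosing $L_3-L_2$ large one concludes that the iterates, evaluated at $t=\tau_0$, tend to $+\infty$, which is incompatible with regularity on $[0,\tau_0]$. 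Hence $T_{\max}<\tau_0$, proving (b).

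The hard part is closing this bootstrap in (b). One must control at each step how concentrated $F(\cdot,t)$ is near $z=L_3$ — equivalently bound the steepness $\partial_z F(L_3,t)=f(L_3,t)$, for which there is no cheap a priori upper bound — since the lower bound on $\int_{L_2}^{L_3}e^{F(y,t)}\,dy$ degrades if $F$ is too peaked; and one must verify that the successive lower bounds genuinely blow up in finite time rather than merely climbing an ever‑taller but finite tower, which is exactly where the freedom to enlarge $L_3$ (so that a short initial time already suffices to ignite the feedback) is used. An alternative route that avoids the iteration is to note that the equation is cooperative — its right‑hand side is monotone in $f$ — so a comparison principle holds (proved by the standard trick of perturbing the solution into a strict supersolution $f+\varepsilon(1+t)$ and passing $\varepsilon\to 0$), and then to exhibit an explicit subsolution that is stationary away from $L_3$ and grows only on a window adjacent to $L_3$, whose profile reduces to a scalar ODE of the form $g''=\kappa e^{\mu g}$ with blow‑up time tending to $0$ as $L_3\to\infty$; there the delicate point is arranging the nonlocal subsolution inequality to hold for every $z\in[L_2,L_3]$, not just at $z=L_3$.
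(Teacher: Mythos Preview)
Your treatment of (a) is correct and parallels the paper's: you use Banach contraction where the paper runs explicit Picard iteration with a $\frac{C^n t^{2n}}{(2n)!}$ bound on successive differences, but these are equivalent standard arguments.

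For (b), however, your bootstrap is incomplete in precisely the way you yourself flag. Once you pass to $F(z,t)=\int_{L_2}^z f$ and feed lower bounds into $\partial_t^2 F(L_3,t)=c\int_{L_2}^{L_3}e^{F(y,t)}\,dy$, you need control on the \emph{profile} of $F(\cdot,t)$ near $L_3$, not just its value at the endpoint, and nothing in your iteration supplies that; a Laplace-type lower bound on the integral would require an \emph{upper} bound on the steepness $\partial_z F(L_3,t)=f(L_3,t)$, which is exactly the quantity you are trying to send to infinity. The paper closes the loop by a mechanism that sidesteps this entirely: instead of tracking $F$ at the single point $L_3$, it runs a dyadic induction on \emph{shrinking spatial windows}. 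Setting $\Delta=L_3-L_2$ and $G_n=(n+1)2^{n+2}$, the paper shows
\[
f\bigl(z,\tau_0(1-2^{-n})\bigr)\ge G_n\qquad\text{for all }z\in[L_3-\Delta 2^{-n},\,L_3],
\]
the induction step using only that for $z\ge L_3-\Delta 2^{-k-1}$ and $s\ge \tau_0(1-2^{-k})$ one has $\int_{L_2}^{z} f(y,s)\,dy\ge \int_{L_3-\Delta 2^{-k}}^{L_3-\Delta 2^{-k-1}}G_k\,dy=2(k+1)\Delta$, and then integrating $\partial_t^2 f\ge c\,e^{2(k+1)\Delta}$ over the time increment of length $\tau_0 2^{-k-1}$. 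The point is that the product of the window width $\Delta 2^{-k-1}$ and the uniform lower bound $G_k$ on $f$ there is what enters the exponent, and this product grows linearly in $k$; so the concentration issue never arises, because the scheme controls $f$ uniformly on each window rather than $F$ at a single point. Your alternative subsolution route via $g''=\kappa e^{\mu g}$ would face the same structural obstacle of verifying the nonlocal inequality for \emph{every} $z\in[L_2,L_3]$; the dyadic induction is the cleaner way through.
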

\begin{proof}
(a) Local existence of solutions can be done via a standard iteration argument. For an a-priori bound, set $h(t):=\sup_{z\in[L_2,L_3]} f(z,t)$. Then differentiating (\ref{equationForF}) gives
\begin{equation}\label{hub} \partial_{tt} h\leq ce^{h(t)(L_3-L_2)},\quad h(0)=1/2, \quad h'(0)=0, \end{equation}
which clearly controls $h(t)$ locally (we can use equality in \eqref{hub} to derive an upper bound). Suppose $h(t)$  stays bounded on $[0,T_0]$. Define the iteration scheme by
$$\partial_t f_n(t)=c\int_{0}^{t}e^{\int_{L_2}^{z}f_{n-1}(y,s)dy}ds, \quad f_n(z,0)=\frac12.$$
It can be seen by induction that $f_n(z,t)$ is increasing for every $z,t$. Note that $f_n$ is bounded by $h$ uniformly for all $n$, $z,$ and $t$. Thus, for $(z,t)\in [L_2,L_3]\times [0,T_0]$ we have
$$|\partial_t (f_n-f_{n-1})(z,t)|\leq c \int_{0}^{t}e^{\|h\|_{L^\infty}(z-L_2)}\max_{[L_2,L_3]\times [0,t]}|f_n(z,s)-f_{n-1}(z,s)|ds.$$
Let $F_n(t)=\max_{[L_2,L_3]\times [0,t]}|f_n(z,s)-f_{n-1}(z,s)|$, then $F(t)$ satisfies
$$F_n'(t)\leq C(L_3,T_0) \int_{0}^{t}F_{n-1}(s)ds$$
for some $C(L_3,T_0) < \infty$ which inductively gives
$$F_1(t)\leq \int_{0}^{t}ds\int_{0}^{s}e^{L_3/2}dr=e^{L_3/2} \frac{t^2}{2},\quad F_n(t)\leq \frac{C(L_3,T_0)^n t^{2n}}{(2n)!}$$
It is clear that the series $F_n$ converges uniformly in $z$ if $t \leq T_0.$ \\
(b) We will show now that $L_3$ can always be chosen so that $f(L_3,t)$ will go to infinity before $\tau_0$. First of all, note that from the definition we have $\partial_t f\geq 0$ for all $t\in[0,\tau_0]$
and $z \in [L_2,L_3]$  and hence for $z\in[L_3-(L_3-L_2)/2,L_3]$ and $t\geq \tau_0/2$
$$f(z,t)\geq \frac{1}{2}+c e^{\frac{L_3-L_2}{4}} \frac{\tau^2_0}{8}.$$
Denote $G_n:=(n+1)2^{n+2}$ and $\Delta=L_3-L_2$ and choose for $\Delta>8$ large enough so that $c e^{\Delta/4} \frac{\tau^2_0}{8} \geq G_1 \equiv 16$. We assert that in fact
\begin{equation} \label{induction}
f(z,\tau_0(1-2^{-n}))\geq G_n, \qquad \forall z\in [L_3-\Delta2^{-n},L_3].
\end{equation}
This assertion can be shown by an inductive argument. The case $n=1$ is instantaneous from the assumption on $\Delta$. Assume by induction that $(\ref{induction})$ holds for $n=k$. When $n=k+1$, for each $z\in [L_3-\Delta 2^{-k-1},L_3]$ we have
\begin{align*}
&f(z,\tau_0(1-2^{-k-1}))\geq \int_{\tau_0(1-2^{-k})}^{\tau_0(1-2^{-k-1})} dt\int_{\tau_0(1-2^{-k})}^{t}ds \exp\bigg(\int_{L_3-\Delta 2^{-k}}^{L_3-\Delta 2^{-k-1}} G_k dy\bigg)\\
&\geq c\frac{\tau^2_0}{2^{2k+3}} e^{2(k+1)\Delta}\geq \frac{e^{2(k+1)\Delta}}{2^{2k+3}} \geq 2^{10k} \geq (k+1)2^{2k+3} = G_{k+1}
\end{align*}
This shows that $\lim_{t\to\tau}f(L_3,t)=\infty$ for some $\tau \leq \tau_0$ as desired.
\end{proof}

Now, we are well-prepared to prove Theorem~\ref{NonDefiniteBlowUp}.
\begin{proof}
Take $\rho_0$ as described in the beginning of this section.
Choose $L_0,L_1$ as above. Let $\tau_0$ satisfy \eqref{tau0con}, \eqref{tau0con1}, and \eqref{tau0con2}.
Suppose that $L_2 \geq L_1+\gamma_1.$ Let $f$ satisfy \eqref{equationForF} with $c = 2e^{-\gamma_1-\gamma_2}-1.$
Choose $L_3$ so that the blow up time $T$ of $f(L_3,t)$ satisfies $T \leq \tau_0$. Fix $L_4>L_3$. Consider
\begin{equation}
\frac{\partial^2\Phi(z,t)}{\partial t\partial z}=\partial_z u(\Phi(z,t),t)=\partial_z \Phi(z,t)(2\omega(\Phi(z,t)-\gamma_1,t)-\omega(\Phi(z,t)+\gamma_2,t))
\end{equation}
with $\partial_{z} \Phi(z,0)\equiv 1$. By Proposition \ref{positivityProp1}, we see that for all $z\in[L_1,\infty)$ and $t\in [0,\tau_0]$,
\begin{equation} \label{monotonicityConclusion}
\frac{\partial^2\Phi(z,t)}{\partial t\partial z}\geq 0
\end{equation}
which indicates that $\partial_z\Phi(z,t)\geq 1$ for all $t$, $z$ in these ranges. Using (\ref{monotonicityConclusion}) and invoking Proposition \ref{positivityProp2}, we can infer that
\begin{equation}\label{diffIneqForPhiZ}
\frac{\partial^2 \Phi(z,t)}{\partial t\partial z} \geq c \partial_{z}\Phi(z,t)\cdot\int_{0}^{t}e^{\Phi(z,s)}ds \geq c \partial_{z}\Phi(z,t)\cdot\int_{0}^{t}e^{\int_{L_2}^{z}\partial_z \Phi(y,s)dy}ds
\end{equation}
for $z \in [L_2,L_3],$ $0 \leq t \leq \tau_0$ while solution exists.
However, it is not hard to establish the comparison $\partial_{z} \Phi(z,t)\geq f(z,t)$ for all $z\in[L_2,L_3]$ all the way until blow up time $T \leq \tau_0$.
Indeed, note that $\partial_z\Phi(z,0)\equiv 1> f(z,0)$. Suppose $T_1<T$ is the first time time when there exists $z_1\in[L_2,L_3]$ such that
$$\partial_z \Phi(z_1,t)=f(z_1,t).$$
However, by (\ref{diffIneqForPhiZ}), for every $s\leq T_1\leq T$ we have
$$\left. \frac{\partial^2 \Phi(z,t)}{\partial t\partial z} \right|_{(z,t)=(z_1,s)} \geq c \int_{0}^{s} e^{\int_{L_2}^{z_1}\partial_z\Phi(y,r)dy}dr \geq c\int_{0}^{s} e^{\int_{L_2}^{z_1}f(y,r)dy}dr= \left. \frac{d}{dt} f(z_1,t) \right|_{t=s}$$
which is a contradiction.

This argument shows that, unless singularity develops earlier in some other way required by Proposition~\ref{BKMcriterion},
$\partial_z \Phi(z,t)$ becomes infinite for some $t \leq \tau_0.$ However this implies that $\partial_z \tilde{u}(z,t)$ becomes infinite too, where we are returning to the $\tilde{u}$ notation for the velocity in $z$ representation \eqref{BSinZ} in order to avoid confusion. But we have
\[ \partial_z \tilde{u}(z,t) = -\partial_x u(x(z)) - \frac{u(x(z))}{x(z)}. \]
Therefore, it is not hard to see that blow up in $\partial_z \tilde{u}(z)$ forces blow up in either $\|\partial_x u\|_{L^\infty},$ or $\|\omega\|_{L^\infty},$
or $\delta(t)^{-1}.$ At this point we can invoke Proposition~\ref{BKMcriterion} which gives us a set of minimal conditions that must happen when singularity forms.
\end{proof}

{\bf Acknowledgement.} Both authors gratefully acknowledge partial support of the NSF-DMS grant 1712294. HY thanks Duke University for its hospitality.


\begin{thebibliography}{10}

\bibitem{AdCauWu1}
D. Adhikari, C. Cao and J. Wu,
\it The 2D Boussinesq equations with vertical viscosity
and vertical diffusivity, \rm
J. Diff. Eq. {\bf 249} (2010), 1078--1088.

\bibitem{AdCaoWu2}
D. Adhikari, C. Cao and J. Wu,
\it Global regularity results for the 2D Boussinesq
equations with vertical dissipation,
\rm J. Diff. Eq. {\bf 251} (2011), 1637--1655.

\bibitem{BKM}
T. Beale, T.Kato and A. Majda, \it
Remarks on the breakdown of smooth solutions for the 3-D Euler equations,
\rm  Comm. Math. Phys. {\bf 94} (1984), no. 1, 61--66.

\bibitem{Chae}
D. Chae, \it
Global regularity for the 2D Boussinesq equations
with partial viscosity terms, \rm
Advances in Mathematics {\bf  203}, Issue 2, (2006) 497--513.

\bibitem{ChaeConstWu}
D. Chae, P. Constantin, J. Wu, \it
An incompressible 2D didactic model with singularity and explicit solutions of the 2D Boussinesq equations, \rm
J. Math. Fluid Mech, {\bf 16} (2014) 473. 

\bibitem{CKY}
K.~Choi, A.~Kiselev and Y.~Yao, \it
Finite time blow up for a 1d model of 2d {Boussinesq} system,
\rm Commun. Math. Phys. {\bf 334} (2015) 1667--1679.

\bibitem{CHKLSY} K.~Choi, T.~Y.~Hou, A.~Kiselev,  G.~Luo, V.~Sverak, and Y.~Yao, \it
On the finite-time blowup of a one-dimensional model for the three-dimensional axisymmetric Euler equations,
\rm Comm. Pure Appl. Math. {\bf 70} (2017), no. 11, 2218-–2243.

\bibitem{DKX} T.~Do, A.~Kiselev, and X.~Xu, \it Stability of blow up for a 1D model of axi-symmetric 3D Euler equation, \rm preprint arXiv:1604.07118,
to appear in Journal of Nonlinear Science

\bibitem{Wu2}
C. Doering, J. Wu, K. Zhao and X. Zheng, \it
Long time behavior of the two-dimensional
Boussinesq equations without buoyancy diffusion, \rm
Physica D: Nonlinear Phenomena, {\bf 376-377} (2018), 144-159.

\bibitem{Wu3}
B. Dong, W. Wang, J. Wu and H. Zhang, \it
Global regularity results for the climate model with
fractional dissipation, \rm
Discrete and Continuous Dynamical Systems - B, {\bf 22}, no.1531-3492\_2017\_11\_74.

\bibitem{Gill}
A.E. Gill, \it
Atmosphere-Ocean Dynamics, \rm
Academic Press (London), 1982.

\bibitem{hyperbolicBoussinesq}
V.~Hoang, B.~Orcan, M.~Radosz and H.~Yang, \it
Blow up with vorticity control for a 2D model of the Boussinesq equations,
\rm J. Diff. Eq. {\bf 264} (2018), 7328-–7356.       

\bibitem{HouLi}
T.Y. Hou, C. Li, \it
Global well-posedness of the viscous Boussinesq equations,
\rm Discrete and Continuous Dynamical Systems - Series A, {\bf 12} (2005), 1--12. 

\bibitem{HouLiu} T.~Y.~Hou and P.~Liu, \it Self-similar singularity of a 1D model for the 3D axisymmetric Euler equations,
\rm  Res. Math. Sci. {\bf 2} (2015), Art. 5, 26 pp

\bibitem{HouLuo1}
T.Y.~Hou and G.~Luo,
\it Toward the finite-time blowup of the 3d axisymmetric Euler equations: A numerical investigation, \rm
Multiscale Model. Simul., {\bf 12}(4) (2014), 1722--1776.

\bibitem{HouLuo2}
T.Y.~Hou and G.~Luo, \it
Potentially singular solutions of the 3D axisymmetric Euler equations, \rm
PNAS, {\bf 111} no. 36, 12968--12973.

\bibitem{KiselevSverak}
A.~Kiselev and V.~\v{S}ver\'{a}k, \it Small scale creation for solutions of the incompressible two
  dimensional Euler equation, \rm
Ann. of Math.(2), {\bf 180}(3) (2014) 1205--1220.

\bibitem{KiselevTan}
A.~Kiselev and C.~Tan,
\it Finite time blow up in the hyperbolic Boussinesq system, \rm
Advances in Mathematics, {\bf 325} (2018) 34--55.

\bibitem{MajdaBertozzi}
A.~Majda and A.~Bertozzi,
\it Vorticity and Incompressible Flow, \rm
Cambridge University Press, 2002.

\bibitem{Pedlosky}
J.~Pedlosky,
\it Geophysical Fluid Dyanmics,
\rm Springer-Verlag, New York, 1987.

\bibitem{Yudovich}
V.I. Yudovich,
\it Eleven great problems of mathematical hydrodynamics, \rm Moscow Mathematical Journal, {\bf 3}(2003), 711--737.

\bibitem{expturb} E.-W.~Saw et al, \it Experimental characterization of extreme events of inertial dissipation in a turbulent swirling flow,
\rm Nature Communications {\bf 7} (2016), Article number: 12466

\bibitem{Wu4}
J. Wu, X. Xu and Z. Ye,
\it The 2D Boussinesq equations with fractional horizontal dissipation and
thermal diffusion, \rm
Journal des Mathématiques Pures et Appliquées, {\bf 115}(2018), 187--217.

\bibitem{Wu5}
W. Yang, Q. Jiu and J. Wu, \it
The 3D incompressible Boussinesq equations with fractional partial dissipation, \rm
 to appear in Communications in Mathematical Sciences.


\end{thebibliography}
\end{document}